\newtheorem{theorem}{Theorem}[section]
\newtheorem{lemma}[theorem]{Lemma}
\newtheorem{definition}[theorem]{Definition}
\theoremstyle{remark} \newtheorem{remark}{Remark}
\title{Stochastic Optimal Control with Control-Dependent Diffusion and State Constraints: A Degenerate Elliptic Approach}
  \author{
	Anderson O. Calixto\thanks{The author would like to thank FAPERJ grant E-26/202.535/2022} \\
	Departamento de Estatística \\
	Universidade Federal Fluminense \\
	\texttt{acalixto@id.uff.br}
	\And
	Bernardo Freitas Paulo da Costa \\
	School of Applied Mathematics  \\
	Getulio Vargas Foundation\\
	\texttt{bernardo.paulo@fgv.br} \\
	\And
	Glauco Valle \thanks{The author would like to thank FAPERJ grant E-26/202.636/2019 and CNPq grants 307938/2022-0 and 403423/20236}
	\\
	Instituto de Matemática \\
	Universidade Federal do Rio de Janeiro\\
	\texttt{glauco.valle@im.ufrj.br} \\
}
\begin{document}
\maketitle
		\begin{abstract}
           We study a stochastic optimal control problem with the state constrained to a smooth, compact domain. The control influences both the drift and a possibly degenerate, control-dependent dispersion matrix, leading to a fully nonlinear, degenerate elliptic Hamilton--Jacobi--Bellman (HJB) equation with a nontrivial Neumann boundary condition. Although these features have been studied separately, this work provides the first unified treatment combining them all. We establish that the optimal value function associated with the control problem is the unique viscosity solution of the HJB equation with a nontrivial Neumann boundary condition, and we present an illustrative example demonstrating the applicability of the framework.
		\end{abstract}
	\vspace{1em}
	\noindent\textbf{Keywords:} Stochastic Optimal Control, Hamilton–Jacobi–Bellman equation, Degenerate Elliptic PDEs, Viscosity Solutions, State Constraints, Neumann Boundary Condition.
\section{Introduction}

Stochastic Optimal Control (SOC) arises in various applications, including mathematical finance~\cite{Pham, Carmona}, engineering~\cite{PengLi}, and economics~\cite{Morimoto}. Traditional approaches typically assume uniformly non-degenerate diffusions and unconstrained domains, which simplify the analysis of the associated Hamilton--Jacobi--Bellman (HJB) equations. However, real-world problems often involve explicit state constraints and control-influenced noise structures that may be degenerate, thereby complicating both the theoretical and numerical analysis.

When state constraints are present, it becomes necessary to impose suitable boundary conditions on the HJB equation. A foundational contribution by Lions and Soner~\cite{LionsSoner} introduced Neumann-type boundary conditions within the viscosity solution framework, enabling the treatment of constrained problems in compact domains. The viscosity solution theory developed by Crandall, Ishii, and Lions~\cite{Crandall} provides a robust and flexible framework for analyzing fully nonlinear PDEs, including degenerate and control-dependent cases.

Degenerate diffusions --- where the diffusion matrix is only positive semidefinite or may vanish in certain directions --- arise naturally in applications such as portfolio optimization~\cite{Pham}. While key works by Fleming and Soner~\cite{Soner}, Nisio~\cite{Nisio}, and Krylov~\cite{Krylov} analyze degenerate controlled diffusions, they generally do not address state constraints in an explicit or systematic manner.

In the context of ergodic control, the comprehensive treatment by Arapostathis, Borkar, and Ghosh~\cite{Ghosh} considers diffusion processes under a long-term average cost criterion, including controlled reflected diffusions on compact domains, where the state process is confined via boundary reflection. In this setting, the control affects only the drift, and the diffusion matrix is uniformly positive definite, ensuring the ellipticity of the HJB equation. This structure allows for the existence of classical solutions under homogeneous Neumann boundary conditions---i.e., vanishing directional derivatives at the boundary. While restrictive, this framework provides a rigorous foundation for incorporating state constraints in ergodic control problems.

In a complementary direction, Kushner and Dupuis~\cite{Kushner} outline a viscosity solution approach to stochastic control problems with state constraints, focusing on models in which the control acts solely on the drift and homogeneous Neumann boundary conditions are imposed. Although their treatment is not exhaustive, it provides important insights into extending viscosity methods to constrained settings.

Despite these significant advances, a unified framework that simultaneously accounts for state constraints, degenerate and control-dependent diffusions, and nontrivial boundary conditions is still lacking. This paper aims to bridge this gap by formulating a general stochastic control problem on a compact domain, in which the control influences both the drift and the (possibly degenerate and control-dependent) diffusion. We establish existence and uniqueness results for viscosity solutions of the corresponding HJB equation under nontrivial boundary conditions.

Furthermore, this paper lays the foundation for the framework developed by the authors in~\cite{CalixtoCostaValle2025}, which enables the analysis of singular perturbation problems in multiscale stochastic optimal control, where the slow variable is confined to a convex, compact domain with a smooth boundary.

This paper is organized as follows. In Section~\S\ref{sec:stochastic_optimal_control_restrictions_state_variables}, we formulate a SOC problem for a system with a positive semidefinite, control-dependent diffusion matrix, subject to state constraints. The constraints are enforced via a penalty function that penalizes deviations from a prescribed compact domain. In Subsection~\S\ref{subsec:continuity_optimal_value_function}, we establish the continuity of the value function, a key ingredient for applying viscosity solution techniques. Subsection~\S\ref{subsec:dynamic_programming_principle} is devoted to proving the Dynamic Programming Principle in the context of strong solutions to the controlled reflected stochastic system. In Subsection~\S\ref{subsec:viscosity_solution_HJB_equation_von_Neumann_boundary_condition}, we prove the main result: the value function is a viscosity solution of a degenerate elliptic HJB equation with nontrivial Neumann boundary conditions. 

Finally, in Section~\S\ref{sec:examples}, we present two illustrative examples. The first, discussed in Subsection~\S\ref{subsec:control_problem_semilinear_hjb_equation}, considers a control problem with state constraints and a positive semidefinite diffusion matrix, allowing for a detailed analysis of the associated optimal control. The second example,  discussed in Subsection~\S\ref{subsec:control_problem_fully_nonlinear_HJB_equation}, introduces control directly into the diffusion matrix, resulting in a problem that captures all the essential features addressed by the proposed framework and, to the best of our knowledge, falls outside the scope of existing approaches in the literature.

\section{Stochastic Optimal Control with State Constraints}
\label{sec:stochastic_optimal_control_restrictions_state_variables}

In this section, we analyze a SOC problem in which a constraint is imposed on the state variable. We associate this constraint with a cost function that quantifies the effort --- or cost --- required to keep the state within the desired bounds. At the same time, we allow the dispersion matrix of the underlying Stochastic Differential Equation (SDE), whose states we aim to control, to depend on the control variable.

We consider a stochastic basis \(\big(\Omega, \mathcal{O}, (\mathcal{F}_t)_{t \geq 0}, \mathbb{P}\big)\) satisfying the usual conditions; that is, the filtration \((\mathcal{F}_t)_{t \geq 0}\) is right-continuous and contains all \(\mathbb{P}\)-null sets. Furthermore, \((W(t))_{t \geq 0}\) denotes a standard \(d_W\)-dimensional Brownian motion defined on this basis. In the context of strong solutions to SDEs, the filtration \((\mathcal{F}_t)_{t \geq 0}\) is taken to be the one generated by the Brownian motion, augmented with the \(\mathbb{P}\)-null sets.

We also fix a compact set \(\mathbb{X} \subset \mathbb{R}^d\), which plays a central role in the formulation of the state constraint. Specifically, we assume the existence of a function \(\phi \in \text{C}^2_{\text{b}}\left(\mathbb{R}^d\right)\) such that:
\begin{itemize}
	\item \(\mathbb{X} = \left\{x \in \mathbb{R}^d : \phi\big(x\big) \leqslant 0\right\}\);
	\item \(\text{Int}\left(\mathbb{X}\right) = \left\{x \in \mathbb{R}^d : \phi\big(x\big) < 0\right\}\);
	\item \(\partial \mathbb{X} = \left\{x \in \mathbb{R}^d : \phi\big(x\big) = 0\right\}\), with \(\|D_x\phi\big(x\big)\| = 1\) for all \(x \in \partial \mathbb{X}\).
\end{itemize}

The dynamical system we aim to control evolves within the constrained state space \(\mathbb{X}\) and is governed by the solution to the following SDE with reflection:
 \begin{equation}
	\label{eq:sde_reflection_control}
    dX_x(t) = \mu_X\big(X_x(t), u(t)\big)\,dt + \sigma_X\big(X_x(t), u(t)\big)\,dW(t) - D_x\phi\big(X_x(t)\big)\,dl_x(t).
 \end{equation}
This equation captures the interplay between the stochastic dynamics of the system and the reflection mechanism that ensures the state remains within the set \(\mathbb{X}\). The system consists of the following components:
\begin{itemize}
	\item \(\mu_X\) and \(\sigma_X\) represent the drift and dispersion fields, respectively;
	\item The process \((X_x(t))_{t \geq 0}\) is continuous and adapted to \((\mathcal{F}_t)_{t \geq 0}\);
	\item The process \((l_x(t))_{t \geq 0}\) is continuous, non-decreasing, and adapted to \((\mathcal{F}_t)_{t \geq 0}\), with \(l_x(0) = 0\), and it satisfies the condition:
	\[
	l_x(t) = \int_0^t \mathds{1}_{\partial \mathbb{X}}\big(X_x(s)\big)\,dl_x(s) \quad \mathbb{P}\text{-a.s.}
	\]
	\item The process \((u(t))_{t \geq 0}\) is progressively measurable with respect to \((\mathcal{F}_t)_{t \geq 0}\), taking values in a convex and compact set \(\mathbb{U} \subset \mathbb{R}^m\). The set of such controls is denoted by \(\mathcal{U}\), and its elements are referred to as admissible controls.
\end{itemize}

\begin{remark}
	The intuitive interpretation of \(-D_x\phi\big(X_x(t)\big)\) is as follows: this gradient acts as a force field that reflects the process \((X_x(t))_{t \geq 0}\) back into the domain \(\mathbb{X}\). The reflection is instantaneously triggered by the process \((l_x(t))_{t \geq 0}\) whenever the state \(X_x(t)\) reaches the boundary \(\partial \mathbb{X}\).
\end{remark}

We now introduce the following hypothesis for the drift and dispersion fields:

\begin{itemize}
	\item[\(\textbf{H}_1\)] The drift \(\mu_{X}\) and the dispersion \(\sigma_{X}\) are continuous, bounded, and Lipschitz continuous with respect to the spatial variable, uniformly with respect to the control variable. Specifically, there exists a constant \(C > 0\) such that for every \(x, y \in \mathbb{R}^d\) and every \(u \in \mathbb{U}\), the following inequalities hold:
	\[
	\|\mu_{X}\big(x,u\big)-\mu_{X}\big(y,u\big)\|+\|\sigma_{X}\big(x,u\big)-\sigma_{X}\big(y,u\big)\| \leqslant C\|x-y\| \quad \text{and} \quad \|\mu_{X}\big(x,u\big)\| + \|\sigma_{X}\big(x,u\big)\| \leqslant C.
	\]
\end{itemize}

Under this hypothesis, it follows from \cite{Sznitman}, \cite{Pardoux}, and \cite{Pilipenko} that the SDE \eqref{eq:sde_reflection_control} admits a unique strong solution.

To complete the model formulation, we now define the cost functional. In particular, we distinguish between the operation cost \(L\) in the interior and the preventive cost \(h\) on the boundary.

\begin{itemize}
	\item[\(\textbf{H}_2\)] The function \(L: \mathbb{R}^d \times \mathbb{U} \to \mathbb{R}\) is continuous and bounded in \((x, u)\), and Lipschitz continuous in the spatial variable, uniformly in the control. That is, there exists a constant \(C > 0\) such that for every \(x, y \in \mathbb{R}^d\) and every \(u \in \mathbb{U}\),
	\[
	\big|L\big(x,u\big)-L\big(y,u\big)\big| \leqslant C\|x - y\| \quad \text{and} \quad \big|L\big(x, u\big)\big| \leqslant C.
	\]
	This function is referred to as the operation cost.
	
	\item[\(\textbf{H}_3\)] The function \(h: \partial \mathbb{X} \to \mathbb{R}\) is continuous. It is referred to as the preventive cost on the boundary.
\end{itemize}

Given \(\beta > 0\) and \(x \in \mathbb{X}\), we define the cost functional \(J^{\beta}_x: \mathcal{U} \to \mathbb{R}\) by
\begin{equation}
	\label{eq:operation_cost_functional}
	J^{\beta}_x\big(u\big) := \mathbb{E}\Bigg[\int_{0}^{+\infty} e^{-\beta s}L\big(X_x(s), u(s)\big)\,ds+\int_{0}^{+\infty} e^{-\beta s} h\big(X_x(s)\big)\,dl_x(s)\Bigg].
\end{equation}

With these definitions in place, we now present the first auxiliary result required for the analysis.

\begin{lemma}
	\label{lm:inequality_continuity_control_reflection}
	Given \(u \in \mathcal{U}\), assume that the pair of processes \(\big(X^{u}(t), l^{u}(t)\big)_{t \geq 0}\) is a strong solution of the SDE \eqref{eq:sde_reflection_control}. Then, for all \(p \geq 1\), \(T \geq 0\), and \(x, y \in \mathbb{X}\), the following inequalities hold:
	\begin{subequations}
		\begin{align}
			\label{eq:inequality_continuity_X_control_reflection}
			\mathbb{E}\Bigg[\sup_{r \in [0,T]}\|X^{u}_x(r)-X^{u}_y(r)\|^p\Bigg] &\leqslant C\exp\big\{CT\big\}\|x - y\|^p,\\
			\label{eq:inequality_upper_quota_X_control_reflection}
			\mathbb{E}\Bigg[\sup_{r \in [0,T]}\|X^{u}_x(r)\|^p\Bigg] &\leqslant C\left(1 + T^p + \|x\|^p\right),\\
			\label{eq:inequality_l_control_reflection}
			\mathbb{E}\Bigg[\sup_{r \in [0,T]}l^{u}_x(r)^p\Bigg] &\leqslant C\left(1 + T^p\right),
		\end{align}
	\end{subequations}
	where \(C > 0\) is a constant independent of \(u\), \(T\), \(x\), and \(y\).
\end{lemma}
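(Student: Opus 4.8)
The plan is to obtain all three estimates from It\^o's formula applied to well-chosen functionals of the solution, using the Burkholder--Davis--Gundy (BDG) inequality to control the martingale parts and a Gronwall-type argument to close each bound. The one recurring nonstandard feature is the reflection term, and the single device that tames it throughout is a second-order Taylor expansion of the boundary function \(\phi\), which --- together with \(\|D_x\phi\| = 1\) and \(\phi \leqslant 0\) on \(\mathbb{X}\) --- converts reflection contributions into quantities controlled either by the local time or by the distance between trajectories.

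I would establish the local-time bound \eqref{eq:inequality_l_control_reflection} first, since it feeds into the other two. Applying It\^o's formula to \(\phi(X^{u}_x(\cdot))\) and using \(\phi(X^{u}_x(s)) = 0\) and \(\|D_x\phi\|^2 = 1\) on \(\partial\mathbb{X}\), together with the support property \(dl^{u}_x = \mathds{1}_{\partial\mathbb{X}}(X^{u}_x)\,dl^{u}_x\), one solves for the local time explicitly:
\begin{equation*}
	l^{u}_x(t) = \phi(x) - \phi\big(X^{u}_x(t)\big) + \int_0^t \Big(\big\langle D_x\phi, \mu_X\big\rangle + \tfrac12\,\mathrm{tr}\big(\sigma_X\sigma_X^{\top}D^2\phi\big)\Big)\,ds + \int_0^t \big\langle D_x\phi, \sigma_X\,dW\big\rangle.
\end{equation*}
Since \(\phi \in \mathrm{C}^2_{\mathrm{b}}\) and \(\mu_X,\sigma_X\) are bounded by \(\textbf{H}_1\), the finite-variation part is bounded by \(C(1+t)\); raising to the \(p\)-th power, taking the supremum over \([0,T]\) (attained at \(T\), as \(l^{u}_x\) is nondecreasing), and applying BDG to the martingale --- whose quadratic variation is dominated by \(CT\) --- yields \eqref{eq:inequality_l_control_reflection}. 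The moment bound \eqref{eq:inequality_upper_quota_X_control_reflection} is then immediate, because reflection confines \(X^{u}_x(t)\) to the compact set \(\mathbb{X}\), so \(\sup_{r\le T}\|X^{u}_x(r)\|\) is bounded by the diameter of \(\mathbb{X}\) and the stated right-hand side is a generous upper bound; alternatively, It\^o's formula for \(\|X^{u}_x\|^2\), boundedness of the coefficients, BDG, and the just-proved local-time bound (used to dominate \(-2\langle X^{u}_x, D_x\phi\rangle\,dl^{u}_x\)) give the same conclusion.

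The continuity estimate \eqref{eq:inequality_continuity_X_control_reflection} is where the real work lies. Writing \(Z(t) = X^{u}_x(t) - X^{u}_y(t)\) and applying It\^o to \(\|Z\|^p\) for \(p \geqslant 2\) (the range \(1 \leqslant p < 2\) following by Jensen), the Lipschitz hypothesis \(\textbf{H}_1\) bounds the drift and quadratic-variation contributions by \(C\|Z\|^p\,dt\), while BDG handles the martingale part. The crucial term is the reflection contribution, which on \(\{dl^{u}_x > 0\}\) reads \(-p\|Z\|^{p-2}\langle Z, D_x\phi(X^{u}_x)\rangle\,dl^{u}_x\); here \(X^{u}_x \in \partial\mathbb{X}\), so \(\phi(X^{u}_x) = 0 \geqslant \phi(X^{u}_y)\), and a second-order Taylor expansion gives \(\langle D_x\phi(X^{u}_x), X^{u}_x - X^{u}_y\rangle \geqslant -\tfrac12\|D^2\phi\|_\infty\|Z\|^2\). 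Treating the symmetric term on \(\{dl^{u}_y > 0\}\) identically, the reflection contribution is bounded above by \(C\|Z\|^p\,d(l^{u}_x + l^{u}_y)\).

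The main obstacle is precisely this local-time-weighted term, which obstructs a direct application of the classical deterministic-time Gronwall lemma and, if treated crudely, destroys the \(\|x-y\|^p\) scaling that \eqref{eq:inequality_continuity_X_control_reflection} demands. I would absorb both time and local time into a single nondecreasing process \(A(t) = Ct + C\big(l^{u}_x(t) + l^{u}_y(t)\big)\), so that the accumulated inequality takes the form \(\|Z(t)\|^p \leqslant \|x-y\|^p + \int_0^t \|Z(s)\|^p\,dA(s) + M(t)\) with \(M\) a martingale. Verifying that \(e^{-A(t)}\|Z(t)\|^p\), minus its martingale part, is a supermartingale --- the weight \(e^{-A}\) being engineered so that its \(dA\) drift exactly cancels the reflection contribution --- yields the pathwise comparison \(\|Z(t)\|^p \leqslant \|x-y\|^p e^{A(t)}\) up to a martingale. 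When \(\mathbb{X}\) is convex the reflection term is in fact nonpositive and may simply be discarded, so that \(A(t) = Ct\) and the clean factor \(\exp\{CT\}\) emerges directly from a deterministic Gronwall inequality; in the general \(\mathrm{C}^2\) case the local-time moment bound \eqref{eq:inequality_l_control_reflection} controls the remaining exponential correction. Taking the supremum over \([0,T]\) and applying BDG then completes the proof.
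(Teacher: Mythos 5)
Your route is genuinely different from the paper's: the paper proves this lemma in one line by invoking \cite[Proposition 4.55]{Pardoux} and observing that hypothesis $\textbf{H}_1$ makes the constants uniform in $u$, whereas you reconstruct the estimate from scratch. Your reconstruction is essentially the standard Lions--Sznitman/Tanaka-type argument that underlies the cited result, and most of it is sound: the It\^o representation of $l^{u}_x$ via $\phi$ (which the paper itself reproduces later, in the proof of Lemma \ref{lm:continuity_processes_X_K_VarK}) correctly yields \eqref{eq:inequality_l_control_reflection}; the observation that $X^{u}_x$ lives in the compact set $\mathbb{X}$ makes \eqref{eq:inequality_upper_quota_X_control_reflection} trivial; and the second-order Taylor expansion of $\phi$ at a boundary point, using $\phi(X^{u}_y)\leqslant 0=\phi(X^{u}_x)$ on $\{dl^{u}_x>0\}$, is exactly the right device to dominate the reflection term by $C\|Z\|^p\,d(l^{u}_x+l^{u}_y)$ in the non-convex case. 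What your self-contained proof buys is transparency about where each hypothesis enters; what the citation buys is brevity and a rigorous handling of the one step you leave soft.

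That soft step is the final Gronwall closure, and as written it contains a gap. The supermartingale argument gives $\mathbb{E}\big[e^{-A(t)}\|Z(t)\|^p\big]\leqslant\|x-y\|^p$ with $A(t)=Ct+C\big(l^{u}_x(t)+l^{u}_y(t)\big)$, but to remove the random weight you must decouple, e.g.\ by Cauchy--Schwarz, $\mathbb{E}\big[\|Z(t)\|^p\big]\leqslant\mathbb{E}\big[e^{2A(t)}\big]^{1/2}\,\mathbb{E}\big[e^{-2A(t)}\|Z(t)\|^{2p}\big]^{1/2}$, and this requires \emph{exponential} moments of the local times. The polynomial bound \eqref{eq:inequality_l_control_reflection}, which is all you invoke (``the local-time moment bound controls the remaining exponential correction''), does not control $\mathbb{E}\big[e^{2A(T)}\big]$. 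The needed exponential moments do hold here --- from your own It\^o representation, $l^{u}_x(T)$ is a bounded quantity plus $CT$ plus a martingale with quadratic variation at most $CT$, so $\mathbb{E}\big[e^{\lambda l^{u}_x(T)}\big]\leqslant C e^{CT}$ by a standard exponential-martingale estimate --- but this is a separate step that must be stated and proved; without it the passage from the weighted bound to \eqref{eq:inequality_continuity_X_control_reflection} does not follow.
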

\begin{proof}
	We apply \cite[Proposition 4.55]{Pardoux} to the pair \(\big(X^{u}(t), l^{u}(t)\big)_{t \geq 0}\), which is a strong solution of the SDE \eqref{eq:sde_reflection_control}. The independence of the constant \(C > 0\) from the control \(u \in \mathcal{U}\) follows from hypothesis \(\textbf{H}_1\), where the Lipschitz constant and the uniform bounds for the drift and diffusion coefficients do not depend on \(u\).
\end{proof}

\begin{lemma}
	\label{lm:functional_J_beta_well_defined}
	The functional \(J^{\beta}_{x}: \mathcal{U} \to \mathbb{R}\) is well defined and finite.
\end{lemma}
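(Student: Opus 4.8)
The plan is to bound each of the two terms appearing in \eqref{eq:operation_cost_functional} separately, using the uniform bounds in hypotheses \(\textbf{H}_2\) and \(\textbf{H}_3\) together with the first-moment estimate \eqref{eq:inequality_l_control_reflection} from Lemma~\ref{lm:inequality_continuity_control_reflection}. First I would dispatch the running-cost term. Since \(\textbf{H}_2\) gives \(|L(x,u)|\leqslant C\) uniformly in \((x,u)\), the integrand is dominated pathwise by \(C e^{-\beta s}\), so
\[
\left|\int_0^{+\infty} e^{-\beta s} L\big(X_x(s), u(s)\big)\,ds\right| \leqslant \int_0^{+\infty} C e^{-\beta s}\,ds = \frac{C}{\beta},
\]
whence this term is finite with the same deterministic bound after taking expectation.

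The main work is the boundary term. The key observation is that, by the support condition \(l_x(t) = \int_0^t \mathds{1}_{\partial\mathbb{X}}(X_x(s))\,dl_x(s)\), the measure \(dl_x\) charges only those times at which \(X_x(s)\in\partial\mathbb{X}\), so that \(h(X_x(s))\) is evaluated exactly where it is defined; since \(h\) is continuous on the compact boundary \(\partial\mathbb{X}\) by \(\textbf{H}_3\), it is bounded, say \(|h|\leqslant M\). Hence it suffices to show \(\mathbb{E}\big[\int_0^{+\infty} e^{-\beta s}\,dl_x(s)\big]<+\infty\). Because \(l_x\) is nondecreasing, \(e^{-\beta s}\,dl_x(s)\) is a nonnegative measure, so by monotone convergence it is enough to bound the truncated expectations uniformly in \(T\). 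I would integrate by parts in the Lebesgue--Stieltjes sense, using \(l_x(0)=0\), to obtain
\[
\int_0^{T} e^{-\beta s}\,dl_x(s) = e^{-\beta T}\, l_x(T) + \beta\int_0^{T} e^{-\beta s} l_x(s)\,ds.
\]
Taking expectations, applying Fubini's theorem to the second term, and invoking \eqref{eq:inequality_l_control_reflection} with \(p=1\) (so that \(\mathbb{E}[l_x(s)]\leqslant C(1+s)\)) yields
\[
\mathbb{E}\left[\int_0^{T} e^{-\beta s}\,dl_x(s)\right] \leqslant e^{-\beta T} C(1+T) + \beta\int_0^{+\infty} C(1+s) e^{-\beta s}\,ds.
\]
The first summand tends to \(0\) as \(T\to+\infty\), while \(\int_0^{+\infty}(1+s)e^{-\beta s}\,ds = \beta^{-1}+\beta^{-2}<+\infty\); this gives a bound uniform in \(T\) and therefore, by monotone convergence, \(\mathbb{E}\big[\int_0^{+\infty} e^{-\beta s}\,dl_x(s)\big]<+\infty\).

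Combining the two estimates shows that both integrals in \eqref{eq:operation_cost_functional} are absolutely integrable, so \(J^{\beta}_x(u)\) is well defined and finite; moreover the resulting bound, \(\tfrac{C}{\beta}+M\,C(1+\beta^{-1})\), is independent of \(u\) and \(x\), which will be useful later. The only delicate point is the boundary term: one must first justify that the integrand is legitimately evaluated on \(\partial\mathbb{X}\) through the support property of \(dl_x\), and then control the growth of the local-time-like process \(l_x\) by reducing, via the integration-by-parts identity, to the first-moment estimate of Lemma~\ref{lm:inequality_continuity_control_reflection}. Everything else is a routine domination argument.
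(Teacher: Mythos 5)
Your proposal is correct and follows essentially the same route as the paper: bound the running-cost term by $C/\beta$ via the uniform bound in $\textbf{H}_2$, and control the boundary term by bounding $h$ on $\partial\mathbb{X}$, integrating by parts in the Lebesgue--Stieltjes sense, and invoking the first-moment estimate \eqref{eq:inequality_l_control_reflection} with $p=1$. The only cosmetic difference is that you pass to the limit $T\to+\infty$ by monotone convergence on truncated integrals where the paper uses Fatou's lemma directly; both are valid.
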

\begin{proof}
	We must check the integrability of the following terms:
	\[
	\mathbb{E}\Bigg[\int_{0}^{+\infty}e^{-\beta s}\big|L\big(X_x(s), u(s)\big)\big|\,ds\Bigg] < \infty \quad \text{and} \quad \mathbb{E}\Bigg[\int_{0}^{+\infty}e^{-\beta s}\big|h\big(X_x(s)\big)\big|\,dl_x(s)\Bigg] < \infty.
	\]
	By hypothesis \(\textbf{H}_2\), there exists a constant \(C_0 > 0\) such that \(\big|L\big(x,u\big)\big| \leqslant C_0\) for all \((x,u) \in \mathbb{R}^d \times \mathbb{U}\). Thus, by Fubini's theorem,
	\begin{equation}
		\label{eq:auxiliary_inequality_14}
		\mathbb{E}\Bigg[\int_{0}^{+\infty}e^{-\beta s}\big|L\big(X_x(s), u(s)\big)\big|\,ds\Bigg] = \int_{0}^{+\infty}e^{-\beta s}\mathbb{E}\left[\big|L\big(X_x(s), u(s)\big)\big|\right]\,ds \leqslant \frac{C_0}{\beta}.
	\end{equation}
	
	For the second term, hypothesis \(\textbf{H}_3\) implies that there exists \(C_0 > 0\) such that \(\big|h\big(x\big)\big| \leqslant C_0\) for all \(x \in \partial\mathbb{X}\). Using integration by parts and Fatou's lemma:
	\[\mathbb{E}\Bigg[\int_{0}^{+\infty}e^{-\beta s}\big|h\big(X_x(s)\big)\big|\,dl_x(s)\Bigg]
	\leqslant C_0\,\mathbb{E}\Bigg[\int_{0}^{+\infty}e^{-\beta s}\,dl_x(s)\Bigg]\leqslant C_0\,\liminf_{r \to +\infty} \mathbb{E}\Bigg[l_x(r)e^{-\beta r} + \beta \int_{0}^{r} l_x(s)e^{-\beta s}\,ds\Bigg].\]
	
	By inequality \eqref{eq:inequality_l_control_reflection} (with \(p=1\)):
	\begin{align*}
		\mathbb{E}\Bigg[\int_{0}^{+\infty}e^{-\beta s}\big|h\big(X_x(s)\big)\big|\,dl_x(s)\Bigg]
		&\leqslant C_0 C \lim_{r \to +\infty}\left(e^{-\beta r}\big(1 + r\big) + \beta \int_{0}^{r}e^{-\beta s}\big(1 + s\big)\,ds\right) \\
		&= C_0 C \beta \int_{0}^{+\infty}e^{-\beta s}\big(1 + s\big)\,ds < \infty.
	\end{align*}
	Therefore,
	\begin{equation}
		\label{eq:auxiliary_inequality_16}
		\mathbb{E}\Bigg[\int_{0}^{+\infty}e^{-\beta s}\big|h\big(X_x(s)\big)\big|\,dl_x(s)\Bigg] \leqslant C_0 C \beta \int_{0}^{+\infty}e^{-\beta s}\big(1 + s\big)\,ds.
	\end{equation}
	From inequalities \eqref{eq:auxiliary_inequality_14} and \eqref{eq:auxiliary_inequality_16}, we conclude that for all \(u \in \mathcal{U}\),
	\[
	\big|J^{\beta}_x\big(u\big)\big| \leqslant \frac{C_0}{\beta} + C_0 C \beta \int_{0}^{+\infty}e^{-\beta s}\big(1 + s\big)\,ds.
	\]
\end{proof}

\subsection{Continuity of the Optimal Value Function}
\label{subsec:continuity_optimal_value_function}

The control problem is formulated by minimizing the cost functional \eqref{eq:operation_cost_functional} over the set of admissible controls \(\mathcal{U}\). To begin its solution, we define the optimal value function as follows:
\begin{equation}
	\label{eq:optimal_value_function}
	v^{\beta}\big(x\big) := \inf_{u \in \mathcal{U}} J^{\beta}_{x}\big(u\big).
\end{equation}

Our goal is to prove that this function is at least continuous in the domain \(\mathbb{X}\). Such continuity is required to ensure the validity of the dynamic programming principle, which will be established in Section~\S\ref{subsec:dynamic_programming_principle}, and to enable the application of the viscosity solution concept in Section~\S\ref{subsec:viscosity_solution_HJB_equation_von_Neumann_boundary_condition}. To this end, we introduce some notations and definitions.

We say that two processes, \(\big(X(t)\big)_{t \geq 0}\) and \(\big(\widetilde{X}(t)\big)_{t \geq 0}\), are equivalent if \(X\) is a modification of \(\widetilde{X}\), that is, for all \(t \in \mathbb{R}_{+}\), we have \(X(t) = \widetilde{X}(t)\) \(\mathbb{P}\)-almost surely (i.e., \(\mathbb{P}\)-a.s.).

\begin{definition}
	\label{df:space_S_p_d_T}
	Given \(T > 0\), \(d \in \mathbb{N}\), and \(p \in [1,+\infty)\), we denote by \(\mathcal{S}^{\emph{p}}_{\emph{d}}\big([0,T]\big)\) the space of equivalence classes of processes \(X: [0,T] \times \Omega \to \mathbb{R}^d\) that are progressively measurable and satisfy
	\[
	\mathbb{E}\Bigg[\sup_{t \in [0,T]}\|X(t)\|^p\Bigg] < \infty.
	\]
\end{definition}

The space \(\mathcal{S}^{\text{p}}_{\text{d}}\big([0,T]\big)\) can naturally be equipped with the following norm:
\begin{equation}
	\label{eq:norm_S_p_d}
	\|X\|_{\mathcal{S}^{\text{p}}_{\text{d}}\big([0,T]\big)} := \mathbb{E}\Bigg[\sup_{t \in [0,T]}\|X(t)\|^p\Bigg]^{\frac{1}{p}}.
\end{equation}

\begin{definition}
	\label{df:space_S_p_d_inf}
	We denote by \(\mathcal{S}^{\emph{p}}_{\emph{d}}\big(\mathbb{R}_{+}\big)\) the space of equivalence classes of processes \(X: \mathbb{R}_{+} \times \Omega \to \mathbb{R}^d\) that are progressively measurable and such that for all \(T > 0\), the restriction \(X|_{[0,T]} \in \mathcal{S}^{\emph{p}}_{\emph{d}}\big([0,T]\big)\).
\end{definition}

For the space \(\mathcal{S}^{\text{p}}_{\text{d}}\big(\mathbb{R}_{+}\big)\), we consider the following norm:
\begin{equation}
	\label{eq:norm_space_S_p_d_inf}
	\|X\|_{\mathcal{S}^{\text{p}}_{\text{d}}\big(\mathbb{R}_{+}\big)} := \int_{0}^{+\infty} e^{-t} \min\Bigg\{1, \|X\|_{\mathcal{S}^{\text{p}}_{\text{d}}\big([0,t]\big)}\Bigg\}\, dt.
\end{equation}
\begin{lemma}[Equicontinuity of the family \(\big(x\mapsto\big(X^{u}_x(\cdot),l^{u}_x(\cdot)\big)\big)_{u\in\mathcal{U}}\)]
	\label{lm:continuity_processes_X_K_VarK}
	Consider that the pair of processes \(\big(X^{u}_x(t),l^{u}_x(t)\big)_{t\geq0}\) is the unique strong solution of the SDE \eqref{eq:sde_reflection_control}. Then, for all \(p\geq1\), the application
	\begin{equation}
		\label{eq:auxiliary_application_X_l}
		\mathbb{X}\ni x\mapsto \big(X^{u}_x(\cdot),l^{u}_x(\cdot)\big)\in \mathcal{S}^{\emph{p}}_{\emph{d}}\big(\mathbb{R}_{+}\big) \times \mathcal{S}^{\emph{p}}_1\big(\mathbb{R}_{+}\big)
	\end{equation}
	is equicontinuous with respect to $u\in\mathcal{U}$. 
\end{lemma}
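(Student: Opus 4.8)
The plan is to reduce the claim to $u$-independent continuity estimates on each finite horizon $[0,T]$ and then to pass to the full norm on $\mathbb{R}_+$ by dominated convergence. The guiding observation is that every constant produced by Lemma~\ref{lm:inequality_continuity_control_reflection} is independent of the control $u$. Hence, once I control $\|X^{u}_x - X^{u}_y\|_{\mathcal{S}^p_d([0,t])}$ and $\|l^{u}_x - l^{u}_y\|_{\mathcal{S}^p_1([0,t])}$ by $C(t)\|x-y\|$ with $C(t)$ not depending on $u$, the definition~\eqref{eq:norm_space_S_p_d_inf} of the norm will yield a modulus of continuity at each $x$ that is uniform in $u$, which is exactly the asserted equicontinuity.

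For the state component the estimate is immediate from \eqref{eq:inequality_continuity_X_control_reflection}, which gives $\|X^{u}_x - X^{u}_y\|_{\mathcal{S}^p_d([0,t])} \le (C e^{Ct})^{1/p}\|x-y\|$ for every $t$ and every $u$. Inserting this into \eqref{eq:norm_space_S_p_d_inf} bounds $\|X^{u}_x - X^{u}_y\|_{\mathcal{S}^p_d(\mathbb{R}_+)}$ by
\begin{equation*}
\int_0^{+\infty} e^{-t}\min\Big\{1,\,(Ce^{Ct})^{1/p}\|x-y\|\Big\}\,dt ,
\end{equation*}
whose integrand is dominated by $e^{-t}$ and tends to $0$ pointwise as $\|x-y\|\to 0$; by dominated convergence the integral tends to $0$, and since it does not involve $u$, this is uniform in $u$.

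The hard part is the local-time component, since Lemma~\ref{lm:inequality_continuity_control_reflection} furnishes only an a priori bound on $l^{u}_x$, not a continuity estimate. To produce one, I would express $l^{u}_x$ through the state by applying It\^o's formula to $\phi\big(X^{u}_x(\cdot)\big)$. Because the quadratic variation comes only from the martingale part, and because $\|D_x\phi\|=1$ on $\partial\mathbb{X}$ while $dl^{u}_x$ is carried by $\{X^{u}_x\in\partial\mathbb{X}\}$, one obtains
\begin{equation*}
l^{u}_x(t) = \phi(x) - \phi\big(X^{u}_x(t)\big) + \int_0^t D_x\phi\cdot\mu_X\,ds + \int_0^t D_x\phi\cdot\sigma_X\,dW + \tfrac{1}{2}\int_0^t \operatorname{tr}\big(\sigma_X\sigma_X^{\top}D_x^2\phi\big)\,ds ,
\end{equation*}
all coefficients evaluated at $\big(X^{u}_x(s),u(s)\big)$. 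Subtracting the analogous identity for $y$ and using that $\phi\in \text{C}^2_{\text{b}}$ together with $\mathbf{H}_1$ makes every integrand Lipschitz in the state, uniformly in $u$, I would bound each term by $C\|X^{u}_x(s)-X^{u}_y(s)\|$; the stochastic integral is treated by the Burkholder--Davis--Gundy inequality, after which the crude bound $\int_0^t\|X^{u}_x-X^{u}_y\|^2\,ds \le t\sup_{[0,t]}\|X^{u}_x-X^{u}_y\|^2$ reduces everything to the supremum norm (valid for all $p\ge 1$). Taking $p$-th moments and invoking \eqref{eq:inequality_continuity_X_control_reflection} then yields $\|l^{u}_x - l^{u}_y\|_{\mathcal{S}^p_1([0,t])}\le C(t)^{1/p}\|x-y\|$ with $C(t)$ independent of $u$.

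With both finite-horizon estimates available, I would repeat the dominated-convergence argument of the second paragraph for \eqref{eq:norm_space_S_p_d_inf} to conclude that $\|l^{u}_x - l^{u}_y\|_{\mathcal{S}^p_1(\mathbb{R}_+)}\to 0$ as $\|x-y\|\to 0$, uniformly in $u$. Combining the two components gives the equicontinuity of $x\mapsto\big(X^{u}_x(\cdot),l^{u}_x(\cdot)\big)$ on $\mathbb{X}$, uniform over $u\in\mathcal{U}$. The only delicate point is the It\^o representation of the local time; everything else is a direct consequence of the $u$-uniform constants in Lemma~\ref{lm:inequality_continuity_control_reflection} and the structure of the norm~\eqref{eq:norm_space_S_p_d_inf}.
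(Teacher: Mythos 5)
Your proposal is correct and follows essentially the same route as the paper: the state component is handled by the $u$-uniform estimate \eqref{eq:inequality_continuity_X_control_reflection} together with dominated convergence in the norm \eqref{eq:norm_space_S_p_d_inf}, and the local-time component is recovered through It\^o's formula applied to $\phi$, using $\|D_x\phi\|=1$ on $\partial\mathbb{X}$ and the support property of $dl^{u}_x$. If anything, your treatment of the stochastic integral via Burkholder--Davis--Gundy is slightly more explicit than the paper's appeal to ``classical arguments such as It\^o isometry,'' and is the right tool for general $p\geq 1$.
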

\begin{proof}
	From the inequalities \eqref{eq:inequality_upper_quota_X_control_reflection} and \eqref{eq:inequality_l_control_reflection} we get that the application \eqref{eq:auxiliary_application_X_l} is well defined. Given \(x,y\in\mathbb{X}\) and \(X^{u}_x(\cdot),X^{u}_y(\cdot)\in \mathcal{S}^{\text{p}}_{\text{d}}\big(\mathbb{R}_{+}\big)\) it follows from definition \ref{df:space_S_p_d_inf} that, for all \(t\geq0\) the restrictions of these processes to the interval \([0,t]\) belong to the space \(\mathcal{S}^{\text{p}}_{\text{d}}\big([0,t]\big)\). On the other hand, we have from the inequality \eqref{eq:inequality_continuity_X_control_reflection} in lemma \ref{lm:inequality_continuity_control_reflection} that
	\begin{equation*}
		\|X^{u}_x-X^{u}_y\|_{\mathcal{S}^{\emph{p}}_{\emph{d}}\big([0,t]\big)}\leqslant \sqrt[p]{Ce^{Ct}}\|x-y\|
	\end{equation*}
	where \(C>0\) is a constant independent of \(u\in\mathcal{U}\). By definition \ref{eq:norm_space_S_p_d_inf} 
	\begin{equation}
		\label{eq:auxiliary_inequality_01}
		\sup_{u\in \mathcal{U}}\|X^{u}_x-X^{u}_y\|_{\mathcal{S}^{\emph{p}}_{\emph{d}}\big(\mathbb{R}_{+}\big)}\leqslant\int_{0}^{+\infty}e^{-t}\min\Bigg\{1,\sqrt[p]{Ce^{Ct}}\|x-y\|\Bigg\}\,dt.
	\end{equation}
	Therefore, by the dominated convergence theorem,  the family \(\big(x\mapsto X^{u}_x(\cdot)\big)_{u\in\mathcal{U}}\) is equicontinuous. 
	
	For \(x\mapsto l^{u}_x(\cdot)\), we apply Itô's formula to the function \(\phi\in \text{C}^2_{\text{b}}\big(\mathbb{R}^d\big)\) which comes from the representation of the domain \(\mathbb{X}\) and get 
	\begin{align}
		\nonumber
		\phi\big(X^{u}_x(\cdot)\big)=\phi\big(x\big)+&\int_{0}^{\cdot}\sum_{i=1}^{d}\partial_{x_i}\phi\big(X^{u}_x(s)\big)\mu_{X,i}\big(X^{u}_x(s),u(s)\big)\,ds-\int_{0}^{\cdot}\|D_x\phi\big(X^{u}_x(s)\big)\|^2\,dl^{u}_x(s)\\
		\label{eq:auxiliary_equation_07}
		+&\int_{0}^{\cdot}\frac{1}{2}\sum_{i=1}^{d}\sum_{j=1}^{d}\partial^2_{x_i,x_j}\phi\big(X^{u}_x(s)\big)\big[\sigma_{X}\sigma^{\top}_{X}\big]_{ij}\big(X^{u}_x(s),u(s)\big)\,ds\\
		\nonumber
		+&\int_{0}^{\cdot}\sum_{i=1}^{d}\sum_{l=1}^{d_W}\partial_{x_i}\phi\big(X^{u}_x(s)\big)\sigma_{X,il}\big(X^{u}_x(s),u(s)\big)\,dW^{(l)}(s)\quad \mathbb{P}\text{-a.s}.
	\end{align}	
	Since the process \(\big(l^{u}_x(t)\big)_{t\geq0}\) satisfies the equation
	\begin{equation*}
		l^{u}_x(t)=\int_{0}^{t}\mathds{1}_{\partial \mathbb{X}}\big(X^{u}_x(s)\big)\,dl^{u}_x(s)\quad \mathbb{P}\text{-a.s},
	\end{equation*}
	the third term on the right-hand side in the equality \eqref{eq:auxiliary_equation_07} can be written as 
	\begin{equation*}
		l^{u}_x(t)=\int_{0}^{t}\|D_x\phi\big(X^{u}_x(s)\big)\|^2 \mathds{1}_{\partial \mathbb{X}}\big(X^{u}_x(s)\big)\,dl^{u}_x(s)\quad \mathbb{P}\text{-a.s}.
	\end{equation*}
	Where equality follows by the representation of the domain \(\mathbb{X}\) and \(\|D_x\phi\big(x\big)\|=1\) for all \(x\in \partial\mathbb{X}\). Denoting by \(\mathcal{L}^{u}_X:\text{C}_{\text{b}}^2\big(\mathbb{R}^d\big)\to \text{C}\big(\mathbb{R}^d\big)\) the differential operator defined by
	\begin{equation}
		\label{eq:auxiliary_equation_08}
		\mathcal{L}^{u}_X\varphi\big(x\big):=\bigg\langle \mu_X\big(x,u\big),D_x\varphi\big(x\big)\bigg\rangle+\frac{1}{2}\text{Tr}\Bigg(\big[\sigma_{X}\sigma^{\top}_{X}\big]\big(x,u\big)D^2_x\varphi\big(x\big)\Bigg)
	\end{equation}
	it follows from \eqref{eq:auxiliary_equation_07} that
	\begin{equation*}
		l^{u}_x\big(\cdot\big)=\int_{0}^{\cdot}\mathcal{L}^{u}_X\phi\big(X^{u}_x(s)\big)\,ds+\sum_{i=1}^d\sum_{l=1}^{d_W}\int_{0}^{\cdot}\partial_{x_i}\phi\big(X^{u}_x(s)\big)\sigma_{X,il}\big(X^{u}_x(s),u(s)\big)\,dW^{(l)}(s)-\bigg(\phi\big(X^{u}_x\big(\cdot\big)\big)-\phi\big(x\big)\bigg)\quad \mathbb{P}\text{-a.s}.
	\end{equation*}
	On the other hand, note that it is possible to develop the term
	\begin{equation}
		\label{eq:auxiliary_equality_02}
		x\mapsto\int_{0}^{\cdot}\mathcal{L}^{u}_X\phi\big(X^{u}_x(s)\big)\,ds+\sum_{i=1}^d\sum_{l=1}^{d_W}\int_{0}^{\cdot}\partial_{x_i}\phi\big(X^{u}_x(s)\big)\sigma_{X,il}\big(X^{u}_x(s),u(s)\big)\,dW^{(l)}(s),
	\end{equation}
	using classical arguments from stochastic calculus (such as Itô isometry), combined with hypothesis \(\textbf{H}_1\) and inequality \eqref{eq:auxiliary_inequality_01} to obtain the  equicontinuity of \eqref{eq:auxiliary_equality_02} in \(\mathcal{S}^{\text{p}}_1\big(\mathbb{R}_{+}\big)\) with respect to \(u\in\mathcal{U}\). Thus, we obtain that the family \(\big(x\mapsto l^{u}_x\big(\cdot\big)\big)_{u\in\mathcal{U}}\) is equicontinuous.
\end{proof}
We now show that the family \(\big(x\mapsto J^{\beta}_x\big(u\big)\big)_{u\in\mathcal{U}}\) is equicontinuous.
\begin{lemma}[Equicontinuity of the family \(\big(x\mapsto J^{\beta}_x\big(u\big)\big)_{u\in\mathcal{U}}\)]
	\label{lm:continuity_functional_cost_initial_condition}
	Given \(\beta>0\) the family of applications \(\big(x\mapsto J^{\beta}_x\big(u\big)\big)_{u\in\mathcal{U}}\) is equicontinuous.
\end{lemma}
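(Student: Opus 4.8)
The plan is to prove the uniform statement: for every $\varepsilon>0$ there is $\delta>0$, independent of $u\in\mathcal{U}$, such that $\|x-y\|<\delta$ implies $|J^\beta_x(u)-J^\beta_y(u)|<\varepsilon$. Every constant below is independent of $u$, since the bounds in Lemmas \ref{lm:inequality_continuity_control_reflection} and \ref{lm:continuity_processes_X_K_VarK} are. I would write $J^\beta_x(u)-J^\beta_y(u)=\Delta_L+\Delta_h$, where $\Delta_L$ gathers the operation-cost integrals and $\Delta_h$ the preventive-cost integrals, and estimate them separately. I would also record one reduction used for the boundary term: since $t\mapsto\|Z\|_{\mathcal{S}^{\text{p}}_1([0,t])}$ is non-decreasing, smallness of the norm \eqref{eq:norm_space_S_p_d_inf} forces smallness on each fixed window (indeed $\min\{1,\|Z\|_{\mathcal{S}^{\text{p}}_1([0,T])}\}\le e^{T}\|Z\|_{\mathcal{S}^{\text{p}}_1(\mathbb{R}_+)}$); hence the equicontinuity of Lemma \ref{lm:continuity_processes_X_K_VarK} yields, for each fixed $T$, that $\sup_{u}\mathbb{E}[\sup_{r\le T}|l^{u}_x(r)-l^{u}_y(r)|^p]\to0$ as $\|x-y\|\to0$.

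The term $\Delta_L$ is routine. Truncating at a horizon $T$ and using $|L|\le C_0$ from $\textbf{H}_2$, the tail over $[T,\infty)$ is at most $\tfrac{2C_0}{\beta}e^{-\beta T}$ in absolute value, uniformly in $u,x,y$, hence $<\varepsilon/4$ for $T$ large. On $[0,T]$ the spatial Lipschitz bound of $\textbf{H}_2$ gives $|\Delta_L^{[0,T]}|\le C\int_0^T e^{-\beta s}\mathbb{E}[\|X^{u}_x(s)-X^{u}_y(s)\|]\,ds$, and \eqref{eq:inequality_continuity_X_control_reflection} with $p=1$ bounds the integrand by $Ce^{CT}\|x-y\|$; for the now-fixed $T$ this is $<\varepsilon/4$ once $\|x-y\|$ is small, uniformly in $u$. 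Thus $|\Delta_L|<\varepsilon/2$.

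The real difficulty is $\Delta_h$, and it is twofold: the two integrals are taken against the distinct random measures $dl^{u}_x$ and $dl^{u}_y$, and $h$ is only continuous. I would first extend $h$ to a bounded continuous function on $\mathbb{R}^d$ (Tietze) and mollify it to obtain $h_n\in\text{C}^2_{\text{b}}(\mathbb{R}^d)$ converging uniformly to the extension on the compact set $\mathbb{X}$. Since $X^{u}_x$ lives in $\mathbb{X}$ and $dl^{u}_x$ is carried by $\partial\mathbb{X}$, replacing $h$ by $h_n$ changes $\Delta_h$ by at most $\|h-h_n\|_{L^\infty(\mathbb{X})}$ times $\mathbb{E}[\int_0^\infty e^{-\beta s}dl^{u}_x]+\mathbb{E}[\int_0^\infty e^{-\beta s}dl^{u}_y]$, and these occupation integrals are bounded uniformly in $u,x,y$ by the computation of Lemma \ref{lm:functional_J_beta_well_defined} (which rests on \eqref{eq:inequality_l_control_reflection}); so this error is $<\varepsilon/10$ for $n$ large. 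Fixing such an $h_n$ with Lipschitz constant $L_n$, I split $\Delta_{h_n}=A_n+B_n$, with $A_n=\mathbb{E}[\int_0^\infty e^{-\beta s}(h_n(X^{u}_x)-h_n(X^{u}_y))\,dl^{u}_x]$ and $B_n=\mathbb{E}[\int_0^\infty e^{-\beta s}h_n(X^{u}_y)\,d(l^{u}_x-l^{u}_y)]$, and truncate both at $T$ (the tails bounded by $2\|h_n\|_\infty\,\mathbb{E}[\int_T^\infty e^{-\beta s}dl^{u}]$, hence $<\varepsilon/10$ each for $T$ large). On $[0,T]$, the Lipschitz bound and Cauchy--Schwarz give $|A_n^{[0,T]}|\le L_n\,\mathbb{E}[\sup_{r\le T}\|X^{u}_x(r)-X^{u}_y(r)\|^2]^{1/2}\,\mathbb{E}[l^{u}_x(T)^2]^{1/2}$, which is $<\varepsilon/10$ once $\|x-y\|$ is small, by \eqref{eq:inequality_continuity_X_control_reflection} and \eqref{eq:inequality_l_control_reflection} with $p=2$, uniformly in $u$.

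The term $B_n$ is where the smoothing is essential, and it is the main obstacle: one cannot bound $\int h\,d(l^{u}_x-l^{u}_y)$ by $\sup|l^{u}_x-l^{u}_y|$, because the signed measure $d(l^{u}_x-l^{u}_y)$ need not have small total variation even when the paths are uniformly close. Since $h_n\in\text{C}^2_{\text{b}}$, however, $s\mapsto e^{-\beta s}h_n(X^{u}_y(s))$ is a semimartingale, so the integration-by-parts (product) formula against the continuous finite-variation process $M:=l^{u}_x-l^{u}_y$ (with $M(0)=0$) converts $B_n^{[0,T]}$ into the boundary term $\mathbb{E}[e^{-\beta T}h_n(X^{u}_y(T))M(T)]$, a $ds$-integral with bounded integrand $e^{-\beta s}(\mathcal{L}^{u}_X h_n-\beta h_n)(X^{u}_y)$, and an integral against $dl^{u}_y$ arising from the reflection of $X^{u}_y$, with bounded integrand $e^{-\beta s}\langle D_x h_n(X^{u}_y),D_x\phi(X^{u}_y)\rangle$, the local-martingale part having zero mean. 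Each of these is dominated by a constant depending only on $h_n,\beta,T$ times $\mathbb{E}[\sup_{r\le T}|M(r)|^2]^{1/2}$ (using that $\mathbb{E}[l^{u}_y(T)^2]^{1/2}$ is bounded for fixed $T$ to handle the $dl^{u}_y$ integral), and this is $<\varepsilon/10$ once $\|x-y\|$ is small, uniformly in $u$, by the finite-horizon equicontinuity of $l$ recorded in the first paragraph. Choosing $n$, then $T$, then $\delta$ in that order gives $|\Delta_h|<\varepsilon/2$, which together with $|\Delta_L|<\varepsilon/2$ yields $|J^\beta_x(u)-J^\beta_y(u)|<\varepsilon$ uniformly in $u$, proving the equicontinuity.
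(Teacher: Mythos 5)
Your proof is correct, and for the boundary term it takes a genuinely different route from the paper. The treatment of $\Delta_L$ (truncation plus the Lipschitz property of $L$ and \eqref{eq:inequality_continuity_X_control_reflection}) coincides with the paper's Step~2. For the preventive-cost term, however, the paper proceeds sequentially: it shows that $(X^{u}_{x_n},l^{u}_{x_n})\to(X^{u}_{x_0},l^{u}_{x_0})$ in probability uniformly in $u$ on a fixed window $[0,T_0]$ (via Markov's inequality and Lemma~\ref{lm:continuity_processes_X_K_VarK}), invokes an external convergence theorem for Stieltjes integrals with converging monotone integrators of uniformly bounded expected variation \cite[Proposition 1.20]{Pardoux} to get convergence in probability of $\int_0^{T_0}e^{-\beta s}h(X^{u}_{x_n})\,dl^{u}_{x_n}$, and then upgrades to convergence of expectations through a uniform integrability estimate (its Step~1). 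You instead give a self-contained, quantitative argument: you correctly identify that the obstruction is the signed measure $d(l^{u}_x-l^{u}_y)$, whose total variation is not controlled by $\sup_r|l^{u}_x(r)-l^{u}_y(r)|$, and you resolve it by mollifying $h$ to a $\text{C}^2_{\text{b}}$ function (the replacement error being uniformly small thanks to the occupation bound underlying Lemma~\ref{lm:functional_J_beta_well_defined}) and then integrating by parts against the finite-variation process $l^{u}_x-l^{u}_y$, so that every resulting term is dominated by $\mathbb{E}[\sup_{r\leqslant T}|l^{u}_x(r)-l^{u}_y(r)|^2]^{1/2}$, which the finite-window reduction of the norm \eqref{eq:norm_space_S_p_d_inf} makes small uniformly in $u$. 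What each approach buys: the paper's is shorter but rests on the cited proposition and a uniform-integrability step, and only yields sequential continuity at each point; yours avoids any external convergence theorem, produces (in principle) an explicit modulus of continuity uniform in $u$, but pays with the Tietze extension, the mollification, and an It\^o product-rule computation whose zero-mean martingale claim deserves the one-line justification that $\mathbb{E}[\sup_{r\leqslant T}|l^{u}_x(r)-l^{u}_y(r)|^2]<\infty$ and $\textbf{H}_1$ make the stochastic integral a true martingale. The order of choices ($n$, then $T$, then $\delta$) is stated and is the right one, so I see no gap.
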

\begin{proof}
	Given $x_0\in \mathbb{X}$, consider $\big(x_n\big)_{n\geqslant0}$ a sequence in $\mathbb{X}$ such that $\lim_{n\to \infty}x_n=x_0$. Let's prove that 
	\begin{equation}
		\label{eq:limit_continuity_J}
		\lim_{n\to \infty}\sup_{u\in \mathcal{U}}\big|J^{\beta}_{x_n}\big(u\big)-J^{\beta}_{x_0}\big(u\big)\big|=0.
	\end{equation}
	From the definition of $J^{\beta}_x$ we can write 
	\begin{equation*}
		J^{\beta}_{x_n}\big(u\big)=\mathbb{E}\big[L^{u}_n\big]
		+\mathbb{E}\big[H^{u}_n\big],
	\end{equation*}
	where:
	\begin{equation*}
		L^{u}_n:=\int_{0}^{+\infty}e^{-\beta s}L\big(X^{u}_{x_n}(s),u(s)\big)\,ds\quad\text{and}\quad H^{u}_n:=\int_{0}^{+\infty}e^{-\beta s}h\big(X^{u}_{x_n}(s)\big)\,dl^{u}_{x_n}(s).
	\end{equation*}
	We will divide the proof of the limit \eqref{eq:limit_continuity_J} into several steps.
	\begin{itemize}
		\item[Step 1](The family $\big\{H^{u}_n\big\}_{n\geqslant0}$ is uniformly integrable.) 
		By an analogous argument to the one used in the proof of Lemma \ref{lm:functional_J_beta_well_defined}, we have that
		\begin{equation}
			\label{eq:auxiliary_inequality_05}
			\mathbb{E}\big[\big|H^{u}_n\big|^2\big]\leqslant 2\beta^2C_0 C^2\int_{0}^{+\infty}e^{-2\beta s}\big(1+s^2\big)ds=:\gamma<\infty. 	
		\end{equation}
		Where the bound we obtain is independent of $u\in \mathcal{U}$. The above inequality implies that
		\begin{equation*}
			\sup_{n\geqslant0}\mathbb{E}\big[\big|H^{u}_n\big|^2\big]<\infty.
		\end{equation*} 
		Thus, from \cite[Item 2 of Lemma 1.12]{Pardoux}  (with $p=2$), the family $\big\{H^{u}_n\big\}_{n\geqslant0}$ is uniformly integrable. 
		
		To finish the proof, we need to show that 
		\begin{equation*}
			\lim_{n\to+\infty}\sup_{u\in \mathcal{U}}\big|\mathbb{E}\big[L^{u}_n\big]- \mathbb{E}\big[L^{u}_{\infty}\big]\big|=0\quad\text{and}\quad\lim_{n\to+\infty}\sup_{u\in \mathcal{U}}\big|\mathbb{E}\big[H^{u}_n\big]-\mathbb{E}\big[H^{u}_{\infty}\big]\big|=0
		\end{equation*} 
		where 
		\begin{equation*}
			L^{u}_{\infty}:=\int_{0}^{+\infty}e^{-\beta s}L\big(X^{u}_{x_0}(s),u(s)\big)ds\quad\text{and}\quad
			H^{u}_{\infty}:=\int_{0}^{+\infty}e^{-\beta s}h\big(X^{u}_{x_0}(s)\big)dl^{u}_{x_0}(s).
		\end{equation*}
		\item[Step 2] In the first case, ($\big(L^{u}_n\big)_{n\geqslant0}$ converging to $L^{u}_{\infty}$), consider the auxiliary sequences $\big(L^{u}_{n,T}\big)_{(n,T)\in \mathbb{N}\times\mathbb{R}_{+}}$ and $\big(L^{u}_{\infty,T}\big)_{T\in \mathbb{R}_{+}}$ defined respectively by:
		\begin{equation*}
			L^{u}_{n,T}:=\int_{0}^{T}e^{-\beta s}L\big(X^{u}_{x_n}(s),u(s)\big)ds\quad\text{and}\quad
			L^{u}_{\infty,T}:=\int_{0}^{T}e^{-\beta s}L\big(X^{u}_{x_0}(s),u(s)\big)ds
		\end{equation*}
		Based on these sequences, we can write
		\begin{equation}
			\label{eq:auxiliary_inequality_06}
			\big|L^{u}_n-L^{u}_{\infty}\big|\leqslant \big|L^{u}_n-L^{u}_{n,T}\big|+\big|L^{u}_{n,T}-L^{u}_{\infty,T}\big|+\big|L^{u}_{\infty,T}-L^{u}_{\infty}\big|\quad \mathbb{P}\text{-a.s}.
		\end{equation}
		Developing the first part on the right-hand side and recalling that according to hypothesis $\textbf{H}_2$ the operation cost is bounded, it follows that
		\begin{equation*}
			\big|L^{u}_n-L^{u}_{n,T}\big|=\Bigg|\int_{T}^{+\infty}e^{-\beta s}L\big(X^{u}_{x_n}(s),u(s)\big)ds\Bigg|\leqslant\int_{T}^{+\infty}Ce^{-\beta s}ds=\frac{Ce^{-T\beta}}{\beta}\quad \mathbb{P}\text{-a.s}.
		\end{equation*}
		Applying expectation to both sides of the above inequality, we get
		\begin{equation*}
			\mathbb{E}\big[\big|L^{u}_n-L^{u}_{n,T}\big|\big]\leqslant \frac{Ce^{-T\beta}}{\beta}.
		\end{equation*}
		Then, given $\epsilon>0$ there exists $T_1>0$ such that for every $T\geqslant T_1$ 
		\begin{equation}
			\label{eq:auxiliary_inequality_07}
			\sup_{u\in \mathcal{U}}\mathbb{E}\big[\big|L^{u}_n-L^{u}_{n,T}\big|\big]\leqslant \frac{Ce^{-T\beta}}{\beta}<\frac{\epsilon}{3}.
		\end{equation}
		By an analogous argument, there exists $T_2>0$ 
		such that for every $T\geqslant T_2$ 
		\begin{equation}
			\label{eq:auxiliary_inequality_08}
			\sup_{u\in \mathcal{U}}\mathbb{E}\big[\big|L^{u}_{\infty,T}-L^{u}_{\infty}\big|\big]\leqslant \frac{Ce^{-T\beta}}{\beta}<\frac{\epsilon}{3}.
		\end{equation}
		Defining $T_0:=\max\big\{T_1,T_2\big\}$ we write
		\begin{equation*}
			\big|L^{u}_{n,T_0}-L^{u}_{\infty,T_0}\big|\leqslant \int_{0}^{T_0}e^{-\beta s}\big|L\big(X^{u}_{x_n}(s),u(s)\big)-L\big(X^{u}_{x_0}(s),u(s)\big)\big|ds\quad \mathbb{P}\text{-a.s}.
		\end{equation*}
		By hypothesis $\textbf{H}_2$, the operation cost $L$ is Lipschitz continuous in the spatial variable, uniformly with respect to the control variable. Therefore, there exists a constant $C>0$ (independent of $u\in \mathcal{U}$) such that   
		\begin{equation*}
			\big|L^{u}_{n,T_0}-L^{u}_{\infty,T_0}\big|\leqslant C\int_{0}^{T_0}e^{-\beta s}\|X^{u}_{x_n}(s)-X^{u}_{x_0}(s)\|ds\quad \mathbb{P}\text{-a.s}.
		\end{equation*}
		Applying the supremum to the norm that appears in the integrand, we get
		\begin{equation*}
			\big|L^{u}_{n,T_0}-L^{u}_{\infty,T_0}\big|\leqslant \frac{C}{\beta}\bigg(1-e^{-T_0\beta}\bigg)\sup_{r\in [0,T_0]}\|X^{u}_{x_n}(r)-X^{u}_{x_0}(r)\|\quad \mathbb{P}\text{-a.s}.
		\end{equation*}
		Now, applying expectation to both sides of the above inequality, it follows from equation \eqref{eq:norm_S_p_d} for $p=1$ (norm in $\mathcal{S}^1_{\text{d}}\big([0,T_0]\big)$) that
		\begin{equation*}
			\mathbb{E}\big[\big|L^{u}_{n,T_0}-L^{u}_{\infty,T_0}\big|\big]\leqslant \frac{C}{\beta}\bigg(1-e^{-T_0\beta}\bigg)\|X^{u}_{x_n}-X^{u}_{x_0}\|_{\mathcal{S}^1_{\text{d}}\big([0,T_0]\big)}.
		\end{equation*}
		From Lemma \ref{lm:continuity_processes_X_K_VarK}, the application $x\mapsto X^{u}_x(\cdot)$ is equicontinuous with respect to $u$ according to the metric induced by the norm \eqref{eq:norm_space_S_p_d_inf}. Then, since $\lim_{n\to \infty}x_n=x_0$, given $\epsilon>0$ there exists $n_0(T_0)\in\mathbb{N}$ such that for every $n\geqslant n_0(T_0)$ 
		\begin{equation}
			\label{eq:auxiliary_inequality_09}
			\sup_{u\in \mathcal{U}}\mathbb{E}\big[\big|L^{u}_{n,T_0}-L^{u}_{\infty,T_0}\big|\big]\leqslant \frac{\epsilon}{3}.
		\end{equation}
		Therefore, by combining the inequalities \eqref{eq:auxiliary_inequality_07}, \eqref{eq:auxiliary_inequality_08} and \eqref{eq:auxiliary_inequality_09} with the inequality \eqref{eq:auxiliary_inequality_06} we obtain, for all $n\geqslant n_0(T_0)$, that
		\begin{equation*}
			\sup_{u\in \mathcal{U}}\mathbb{E}\big[\big|L^{u}_n-L^{u}_{\infty}\big|\big]\leqslant \epsilon.
		\end{equation*}
		This implies that 
		\begin{equation*}
			\lim_{n\to \infty}\sup_{u\in \mathcal{U}}\big|\mathbb{E}\big[L^{u}_n\big]-\mathbb{E}\big[L^{u}_{\infty}\big]\big|=0.
		\end{equation*}
		\item[Step 3] We want to use \cite[Proposition 1.20]{Pardoux} , to prove that 
		\begin{equation*}
			\lim_{n\to+\infty}\sup_{u\in \mathcal{U}}\big|\mathbb{E}\big[H^{u}_n\big]-\mathbb{E}\big[H^{u}_{\infty}\big]\big|=0.
		\end{equation*}
		Similarly to what we did in the first case, we define the auxiliary sequences $\big(H^{u}_{n,T}\big)_{(n,T)\in\mathbb{N}\times\mathbb{R}_{+}}$ and $\big(H^{u}_{\infty,T}\big)_{T\in\mathbb{R}_{+}}$ respectively by:
		\begin{equation*}
			H^{u}_{n,T}:=\int_{0}^{T}e^{-\beta s}h\big(X^{u}_{x_n}(s)\big)dl^{u}_{x_n}(s)\quad\text{and}\quad
			H^{u}_{\infty,T}:=\int_{0}^{T}e^{-\beta s}h\big(X^{u}_{x_0}(s)\big)dl^{u}_{x_0}(s).
		\end{equation*}
		Based on these sequences, we obtain
		\begin{equation*}
			\big|H^{u}_n-H^{u}_{\infty}\big|\leqslant \big|H^{u}_n-H^{u}_{n,T}\big|+\big|H^{u}_{n,T}-H^{u}_{\infty,T}\big|+\big|H^{u}_{\infty,T}-H^{u}_{\infty}\big|\quad \mathbb{P}\text{-a.s}.
		\end{equation*}
		Let's estimate the first term on the right in the inequality above. By hypothesis \(\textbf{H}_3\), the preventive cost on the boundary is bounded, then there exists a constant $C>0$ such that, $\big|h\big(x\big)\big|\leqslant C$ for all $x\in\partial \mathbb{X}$. Fatou's lemma and integration by parts imply that
		\begin{align*}
			\mathbb{E}\big[\big|H^{u}_n-H^{u}_{n,T}\big|\big]&\leqslant\mathbb{E}\Bigg[\int_{T}^{+\infty}e^{-\beta s}\bigg|h\big(X^{u}_{x_n}(s)\big)\bigg|dl^{u}_{x_n}(s)\Bigg]\leqslant C\mathbb{E}\Bigg[\int_{T}^{+\infty}e^{-\beta s}dl^{u}_{x_n}(s)\Bigg]\\
			&=C\mathbb{E}\Bigg[\liminf_{r\to+\infty}\int_{T}^{r}e^{-\beta s}dl^{u}_{x_n}(s)\Bigg]\leqslant C\liminf_{r\to+\infty}\mathbb{E}\Bigg[\int_{T}^{r}e^{-\beta s}dl^{u}_{x_n}(s)\Bigg]\\
			&=C\liminf_{r\to+\infty}\mathbb{E}\Bigg[\Bigg(l^{u}_{x_n}(r)e^{-\beta r}-l^{u}_{x_n}(T)e^{-\beta T}\Bigg)+\beta\int_{T}^{r}l^{u}_{x_n}(s)e^{-\beta s}ds\Bigg]\\
			&=C\liminf_{r\to+\infty}\Bigg\{\mathbb{E}\Bigg[l^{u}_{x_n}(r)e^{-\beta r}-l^{u}_{x_n}(T)e^{-\beta T}\Bigg]+\beta\mathbb{E}\Bigg[\int_{T}^{r}l^{u}_{x_n}(s)e^{-\beta s}ds\Bigg]\Bigg\}.
		\end{align*}
		Since $-l^{u}_{x_n}(T)e^{-\beta T}\leqslant0$ $\mathbb{P}$-a.s it follows that 
		\begin{equation*}
			\mathbb{E}\big[\big|H^{u}_n-H^{u}_{n,T}\big|\big]\leqslant C\liminf_{r\to+\infty}\Biggl\{e^{-\beta r}\mathbb{E}\big[l^{u}_{x_n}(r)\big]+\beta\mathbb{E}\Bigg[\int_{T}^{r}l^{u}_{x_n}(s)e^{-\beta s}ds\Bigg]\Biggr\}.
		\end{equation*}
		By Fubini's theorem and the inequality \eqref{eq:inequality_l_control_reflection} we obtain that 
		\begin{equation*}
			\mathbb{E}\big[\big|H^{u}_n-H^{u}_{n,T}\big|\big]\leqslant CC_0\liminf_{r\to+\infty}\Biggl\{e^{-\beta r}\big(1+r\big)+\beta\int_{T}^{r}e^{-\beta s}\big(1+s\big)ds\Biggr\}.
		\end{equation*}
		This implies that 
		\begin{equation*}
			\big|\mathbb{E}\big[H^{u}_n\big]-\mathbb{E}\big[H^{u}_{n,T}\big]\big|\leqslant C_0C\beta\int_{T}^{+\infty}e^{-\beta s}\big(1+s\big)ds.
		\end{equation*}
		Now, given $\epsilon>0$ there exists $T_1>0$ such that for every $T\geqslant T_1$
		\begin{equation}
			\label{eq:auxiliary_inequality_10}
			\sup_{u\in \mathcal{U}}\big|\mathbb{E}\big[H^{u}_n\big]-\mathbb{E}\big[H^{u}_{n,T}\big]\big|\leqslant \frac{\epsilon}{3}.
		\end{equation}
		By an analogous argument, there exists $T_2>0$ 
		such that for every $T\geqslant T_2$ 
		\begin{equation}
			\label{eq:auxiliary_inequality_11}
			\sup_{u\in \mathcal{U}}\big|\mathbb{E}\big[H^{u}_{\infty,T}\big]-\mathbb{E}\big[H^{u}_{\infty}\big]\big|\leqslant \frac{\epsilon}{3}.
		\end{equation}
		On the other hand, defining $T_0:=\max\big\{T_1,T_2\big\}$ and given $\eta>0$, we have by Markov's inequality 
		\begin{align*}
			\mathbb{P}\Big(\|X^{u}_{x_n}-X^{u}_{x_0}\|_{T_0}\geqslant \eta\Big)&\leqslant\frac{1}{\eta}\mathbb{E}\big[\|X^{u}_{x_n}-X^{u}_{x_0}\|_{T_0}\big]=\frac{1}{\eta}\|X^{u}_{x_n}-X^{u}_{x_0}\|_{\mathcal{S}^1_{\text{d}}\big([0,T_0]\big)},\\
			\mathbb{P}\Big(\|l^{u}_{x_n}-l^{u}_{x_0}\|_{T_0}\geqslant \eta\Big)&\leqslant\frac{1}{\eta}\mathbb{E}\big[\|l^{u}_{x_n}-l^{u}_{x_0}\|_{T_0}\big]=\frac{1}{\eta}\|l^{u}_{x_n}-l^{u}_{x_0}\|_{\mathcal{S}^1_1\big([0,T_0]\big)}.
		\end{align*}
		Which implies
		\begin{align*}
			\sup_{u\in \mathcal{U}}\mathbb{P}\Big(\|X^{u}_{x_n}-X^{u}_{x_0}\|_{T_0}\geqslant \eta\Big)&\leqslant\frac{1}{\eta}\sup_{u\in \mathcal{U}}\|X^{u}_{x_n}-X^{u}_{x_0}\|_{\mathcal{S}^1_{\text{d}}\big([0,T_0]\big)},\\
			\sup_{u\in \mathcal{U}}\mathbb{P}\Big(\|l^{u}_{x_n}-l^{u}_{x_0}\|_{T_0}\geqslant \eta\Big)&\leqslant\frac{1}{\eta}\sup_{u\in \mathcal{U}}\|l^{u}_{x_n}-l^{u}_{x_0}\|_{\mathcal{S}^1_1\big([0,T_0]\big)}.
		\end{align*}
		Since $\lim_{n\to+\infty}x_n=x_0$, it follows by Lemma \ref{lm:continuity_processes_X_K_VarK} (equicontinuity with respect to u of the applications $x\mapsto X^{u}_x\big(\cdot\big)$ and $x\mapsto l^{u}_x\big(\cdot\big)$), that 
		\begin{equation*}
			\lim_{n\to+\infty}\sup_{u\in \mathcal{U}}\mathbb{P}\Big(\|X^{u}_{x_n}-X^{u}_{x_0}\|_{T_0}\geqslant \eta\Big)=0\quad\text{and}\quad\lim_{n\to+\infty} \sup_{u\in \mathcal{U}}\mathbb{P}\Big(\|l^{u}_{x_n}-l^{u}_{x_0}\|_{T_0}\geqslant \eta\Big)=0.
		\end{equation*}
		Then, the sequences $\big(\|X^{u}_{x_n}-X^{u}_{x_0}\|_{T_0}\big)_{n\geqslant 0}$ and $\big(\|l^{u}_{x_n}-l^{u}_{x_0}\|_{T_0}\big)_{n\geqslant 0}$ converge to zero in probability uniformly in $u\in\mathcal{U}$. This implies that the sequence $\big(\|X^{u}_{x_n}-X^{u}_{x_0}\|_{T_0}+\|l^{u}_{x_n}-l^{u}_{x_0}\|_{T_0}\big)_{n\geqslant 0}$ also converges to zero in probability uniformly in $u\in\mathcal{U}$. 
		By the inequality \eqref{eq:inequality_l_control_reflection} we have, for $p=1$, that 
		\begin{equation*}
			\lambda:=\sup_{n\geqslant0}\mathbb{E}\big[ l^{u}_{x_n}\big(T_0\big)\big]\leqslant C_0\big(1+T_0\big)<\infty.
		\end{equation*}
		Then, by \cite[Proposition 1.20]{Pardoux}, it follows that
		\begin{equation*}
			H^{u}_{n,T_0}=\int_{0}^{T_0}e^{-\beta s} h\big(X^{u}_{x_n}(s)\big)dl^{u}_{x_n}(s)\to \int_{0}^{T_0}e^{-\beta s} h\big(X^{u}_{x_0}(s)\big)dl^{u}_{x_0}(s)=H^{u}_{\infty,T_0}
		\end{equation*}
		in probability uniformly in $u\in\mathcal{U}$. Consequently, the sequence $\big(H^{u}_{n,T_0}\big)_{n\geqslant0}$ converges in distribution to $H^{u}_{\infty,T_0}$ uniformly in $u\in\mathcal{U}$. Since the family $\big\{H^{u}_{n,T_0}\big\}_{n\geqslant0}$ is uniformly integrable, it follows, by \cite[Theorem 1.15]{Pardoux}, that given $\epsilon>0$ there exists $n_0(T_0)\in\mathbb{N}$ such that for every $n\geqslant n_0(T_0)$
		\begin{equation}
			\label{eq:auxiliary_inequality_12}
			\sup_{u\in \mathcal{U}}\big|\mathbb{E}\big[H^{u}_{n,T_0}\big]-\mathbb{E}\big[H^{u}_{\infty,T_0}\big]\big|<\frac{\epsilon}{3}.
		\end{equation}
		Combining the inequalities \eqref{eq:auxiliary_inequality_10}, \eqref{eq:auxiliary_inequality_11} and \eqref{eq:auxiliary_inequality_12}, we get
		\begin{equation*}
			\sup_{u\in \mathcal{U}}\big|\mathbb{E}\big[H^{u}_n\big]-\mathbb{E}\big[H^{u}_{\infty}\big]\big|<\epsilon.
		\end{equation*}
		\item[Step 4](Conclusion) Finally, we get
		\begin{equation*}
			\lim_{n\to+\infty}\sup_{u\in \mathcal{U}}\big|J^{\beta}_{x_n}\big(u\big)-J^{\beta}_{x_0}\big(u\big)\big|\leqslant \lim_{n\to+\infty}\sup_{u\in \mathcal{U}}\big|\mathbb{E}\big[L^{u}_n\big]- \mathbb{E}\big[L^{u}_{\infty}\big]\big|+\lim_{n\to+\infty}\sup_{u\in \mathcal{U}}\big|\mathbb{E}\big[H^{u}_n\big]-\mathbb{E}\big[H^{u}_{\infty}\big]\big|=0.
		\end{equation*}
		We conclude that the family $\big(x\mapsto J^{\beta}_{x}\big(u\big)\big)_{u\in\mathcal{U}}$ is equicontinuous.
	\end{itemize}
\end{proof}
\begin{theorem}[Continuity of the Optimal Value Function]
	\label{tm:continuity_optimal_value_function}
	The Optimal Value Function \eqref{eq:optimal_value_function} is continuous.
\end{theorem}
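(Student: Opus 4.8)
The plan is to derive the continuity of $v^\beta$ as an immediate corollary of the equicontinuity established in Lemma~\ref{lm:continuity_functional_cost_initial_condition}, by invoking the elementary principle that the pointwise infimum of a uniformly equicontinuous family of real-valued functions is itself continuous. Concretely, I would fix $x_0 \in \mathbb{X}$ and an arbitrary sequence $(x_n)_{n \geq 0}$ in $\mathbb{X}$ with $x_n \to x_0$, and aim to show $v^\beta(x_n) \to v^\beta(x_0)$; since both the point and the sequence are arbitrary, this gives continuity on all of $\mathbb{X}$.

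The crucial step is the order-theoretic bound
\begin{equation*}
	\big| v^\beta(x_n) - v^\beta(x_0) \big| = \Big| \inf_{u \in \mathcal{U}} J^\beta_{x_n}(u) - \inf_{u \in \mathcal{U}} J^\beta_{x_0}(u) \Big| \leqslant \sup_{u \in \mathcal{U}} \big| J^\beta_{x_n}(u) - J^\beta_{x_0}(u) \big| =: M_n.
\end{equation*}
To justify it I would argue as follows: for every fixed $u \in \mathcal{U}$ one has $J^\beta_{x_n}(u) \geqslant J^\beta_{x_0}(u) - M_n \geqslant v^\beta(x_0) - M_n$, and taking the infimum over $u$ yields $v^\beta(x_n) \geqslant v^\beta(x_0) - M_n$. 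Exchanging the roles of $x_n$ and $x_0$ gives the reverse inequality $v^\beta(x_0) \geqslant v^\beta(x_n) - M_n$, and the two together produce the displayed estimate.

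Finally, Lemma~\ref{lm:continuity_functional_cost_initial_condition}, and in particular the limit~\eqref{eq:limit_continuity_J}, asserts precisely that $M_n \to 0$ as $n \to \infty$. Combined with the bound above, this forces $v^\beta(x_n) \to v^\beta(x_0)$, proving the claim. I do not expect any genuine obstacle at this stage: all of the analytic difficulty---the uniform-in-$u$ control of the reflected state process, of the local time $l^u_x$, and of the boundary integral---has already been absorbed into the equicontinuity lemma, so the remaining work reduces to the purely abstract infimum inequality, whose only subtlety is keeping the estimate uniform over $u \in \mathcal{U}$, a uniformity that Lemma~\ref{lm:continuity_functional_cost_initial_condition} already supplies.
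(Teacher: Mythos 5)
Your proposal is correct and follows essentially the same route as the paper: the paper's proof also reduces the claim to the bound $\big|v^{\beta}(x_0)-v^{\beta}(x)\big|\leqslant\sup_{u\in\mathcal{U}}\big|J^{\beta}_{x_0}(u)-J^{\beta}_{x}(u)\big|$ via the standard infimum inequality and then concludes by Lemma~\ref{lm:continuity_functional_cost_initial_condition}. Your justification of the key inequality (bounding $J^{\beta}_{x_n}(u)$ from below uniformly in $u$ and symmetrizing) is a clean equivalent of the paper's algebraic manipulation.
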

\begin{proof}
	On the one hand, given $x_0,x\in\mathbb{X}$, we have that
	\begin{align*}
		-J^{\beta}_x\big(\widetilde{u}\big)&=J^{\beta}_{x_0}\big(\widetilde{u}\big)-J^{\beta}_x\big(\widetilde{u}\big)-J^{\beta}_{x_0}\big(\widetilde{u}\big)\leqslant\big|J^{\beta}_{x_0}\big(\widetilde{u}\big)-J^{\beta}_x\big(\widetilde{u}\big)\big|-J^{\beta}_{x_0}\big(\widetilde{u}\big)\\
		&\leqslant\sup_{u\in \mathcal{U}}\big|J^{\beta}_{x_0}\big(u\big)-J^{\beta}_x\big(u\big)\big|+\sup_{u\in \mathcal{U}}\biggl\{-J^{\beta}_{x_0}\big(u\big)\biggr\}\\
		&=\sup_{u\in \mathcal{U}}\big|J^{\beta}_{x_0}\big(u\big)-J^{\beta}_x\big(u\big)\big|-\inf_{u\in \mathcal{U}}J^{\beta}_{x_0}\big(u\big).
	\end{align*}
	The above inequality implies 
	\begin{align*}
		-\inf_{u\in\mathcal{U}}J^{\beta}_x\big(u\big)=\sup_{u\in \mathcal{U}}\biggl\{-J^{\beta}_x\big(u\big)\biggr\}\leqslant \sup_{u\in \mathcal{U}}\big|J^{\beta}_{x_0}\big(u\big)-J^{\beta}_x\big(u\big)\big|-\inf_{u\in \mathcal{U}}J^{\beta}_{x_0}\big(u\big).
	\end{align*}
	In other words,
	\begin{equation*}
		\inf_{u\in\mathcal{U}}J^{\beta}_{x_0}\big(u\big)-\inf_{u\in \mathcal{U}}J^{\beta}_x\big(u\big)\leqslant\sup_{u\in \mathcal{U}}\big|J^{\beta}_{x_0}\big(u\big)-J^{\beta}_x\big(u\big)\big|.
	\end{equation*}
	On the other hand, since the argument is symmetrical, it follows that
	\begin{equation*}
		\inf_{u\in \mathcal{U}}J^{\beta}_x\big(u\big)-\inf_{u\in \mathcal{U}}J^{\beta}_{x_0}\big(u\big)\leqslant  \sup_{u\in \mathcal{U}}\big|J^{\beta}_{x_0}\big(u\big)-J^{\beta}_x\big(u\big)\big|.
	\end{equation*}
	Thus, we get
	\begin{equation*}
		\big|v^{\beta}\big(x_0\big)-v^{\beta}\big(x\big)\big|=\Bigg|\inf_{u\in\mathcal{U}}J^{\beta}_{x_0}\big(u\big)-\inf_{u\in \mathcal{U}}J^{\beta}_x\big(u\big)\Bigg|\leqslant  \sup_{u\in \mathcal{U}}\big|J^{\beta}_{x_0}\big(u\big)-J^{\beta}_x\big(u\big)\big|.
	\end{equation*}
	By Lemma \ref{lm:continuity_functional_cost_initial_condition} given $\epsilon>0$ there exists $\delta_{x_0}(\epsilon)>0$ such that
	\begin{equation*}
		\|x_0-x\|\leqslant \delta_{x_0}(\epsilon)\Rightarrow \sup_{u\in \mathcal{U}}\big|J^{\beta}_{x_0}\big(u\big)-J^{\beta}_x\big(u\big)\big|<\epsilon.
	\end{equation*}
	This implies
	\begin{equation*}
		\|x_0-x\|\leqslant \delta_{x_0}(\epsilon)\Rightarrow \big|v^{\beta}\big(x_0\big)-v^{\beta}\big(x\big)\big|<\epsilon.
	\end{equation*}
\end{proof}
\subsection{The Dynamic Programming Principle}
\label{subsec:dynamic_programming_principle}
Dynamic programming is a widely used methodology for solving problems involving multi-stage or sequential decision-making in discrete time; see, for example, \cite[Chapter 12]{Parmigiani} and \cite[Chapter 4]{Kochenderfer}. This methodology relies on an optimality principle known as the Dynamic Programming Principle (DPP). For a thorough theoretical development, see \cite{Bertsekas01}; for a discussion on the practical challenges and algorithmic implementations, refer to \cite{Powell}.

The DPP can also be extended to continuous-time decision problems, such as those arising in optimal control theory. In the deterministic setting, its mathematical formulation is relatively straightforward; see, for instance, \cite[Chapter 12,  Lemma 202]{Baumeister}. However, in the stochastic setting, the DPP becomes a highly technical result. The level of complexity depends on the nature of the control strategies considered. For example, if feedback-type controls are used, then strong solutions to the controlled SDE \eqref{eq:sde_reflection_control} may exist only in very specific situations.

Before presenting the main result, we make a few additional remarks. The admissible controls under consideration are adapted to the filtration $\big(\mathcal{F}_t\big)_{t\geq 0}$, which is the completion of the filtration generated by the Brownian motion. Consequently, these controls can be represented as measurable functions of the Brownian motion’s trajectory. This leads, by arguments similar to those in \cite[Chapter 8, Theorem 8.5]{LeGall}, to the existence of measurable functions $\mathfrak{X}$ and $\mathfrak{L}$ such that
\begin{equation*}
	X^{u}_x(t)=\mathfrak{X}^{u}_x\big(W\big)(t)\quad\text{and}\quad l^{u}_x(t)=\mathfrak{L}^{u}_x\big(W\big)(t). 
\end{equation*} 
Thus, given a stopping time $\tau:\Omega\to \mathbb{R}_{+}$ with respect to $\big(\mathcal{F}_t\big)_{t\geq0}$ and the $\sigma$-algebra 
\begin{equation*}
	\mathcal{F}_{\tau}:=\Bigg\{A\in \mathcal{O}:\big\{\tau\leq t\big\}\cap A\in \mathcal{F}_t,\hspace{0.15cm}\forall t\in\mathbb{R}_{+}\Bigg\},
\end{equation*}
the following equality holds 
\begin{equation*}
	\mathbb{E}\Bigg[\int_{\tau}^{+\infty}e^{-\beta (s-\tau)}L\big(X_{X_x(\tau)}(s), u(s)\big)\,ds+\int_{\tau}^{+\infty}e^{-\beta (s-\tau)}h\big(X_{X_x(\tau)}(s)\big)\,dl_{X_x(\tau)}(s)\Bigg|\mathcal{F}_{\tau}\Bigg]=J^{\beta}_{X_x(\tau)}\big(\widetilde{u}\big)\quad \mathbb{P}\text{-a.s}
\end{equation*}
where $\widetilde{u}$ is the restriction of the control $u$ to the interval $[\tau,+\infty)$. The above result can also be obtained, by a convenient adaptation of Lemma 3.2 from \cite[Chapter 4]{Zhou}. 

For the case with an infinite horizon and reflection on the boundary we have.
\begin{theorem}[Dynamic Programming Principle]
	\label{tm:dynamic_programming_principle}
	Consider hypotheses $\emph{\textbf{H}}_1$, $\emph{\textbf{H}}_2$ and $\emph{\textbf{H}}_3$. Then, for all $x\in \mathbb{X}:$
	\begin{subequations}
		\label{subeq:dynamic_programming_principle}
		\begin{align}
			\label{eq:dynamic_programming_principle_inf_inf}
			v^{\beta}\big(x\big)=\inf_{u\in \mathcal{U}}\inf_{\tau\in \mathcal{T}}\mathbb{E}\Bigg[&\int_{0}^{\tau}e^{-\beta s}L\big(X_x(s), u(s)\big)\,ds+\int_{0}^{\tau}e^{-\beta s}h\big(X_x(s)\big)\,dl_x(s)+e^{-\beta\tau}v^{\beta}\big(X_x(\tau)\big)\Bigg] \\
			\label{eq:dynamic_programming_principle_inf_sup}
			=\inf_{u\in \mathcal{U}}\sup_{\tau\in \mathcal{T}}\mathbb{E}\Bigg[&\int_{0}^{\tau}e^{-\beta s}L\big(X_x(s), u(s)\big)\,ds+\int_{0}^{\tau}e^{-\beta s}h\big(X_x(s)\big)\,dl_x(s) +e^{-\beta\tau}v^{\beta}\big(X_x(\tau)\big)\Bigg],
		\end{align}
	\end{subequations}
	where $\mathcal{T}$ is the set of stopping times with respect to $\big(\mathcal{F}_t\big)_{t\geq0}$. In addition, we adopt the convention that $e^{-\beta \tau(\omega)}=0$ for all $\omega\in\Omega$ such that $\tau(\omega)=+\infty$.
\end{theorem}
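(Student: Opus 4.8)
The plan is to sandwich the value function between the two right-hand sides by establishing the chain
\[
v^{\beta}\big(x\big)\leqslant W_1\big(x\big)\leqslant W_2\big(x\big)\leqslant v^{\beta}\big(x\big),
\]
where $W_1\big(x\big)$ and $W_2\big(x\big)$ denote the $\inf_u\inf_\tau$ and $\inf_u\sup_\tau$ expressions appearing in \eqref{eq:dynamic_programming_principle_inf_inf} and \eqref{eq:dynamic_programming_principle_inf_sup}, and $G\big(x,u,\tau\big)$ denotes the bracketed expectation common to both. The inequality $W_1\big(x\big)\leqslant W_2\big(x\big)$ is immediate, since $\inf_{\tau}\leqslant\sup_{\tau}$ for each fixed $u$. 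The workhorse is the cocycle identity obtained by splitting $J^{\beta}_x\big(u\big)$ at the stopping time $\tau$ and applying the conditional-expectation formula recalled just before the statement together with the tower property: for every $u\in\mathcal{U}$ and every $\tau\in\mathcal{T}$,
\[
J^{\beta}_x\big(u\big)=\mathbb{E}\Bigg[\int_{0}^{\tau}e^{-\beta s}L\big(X_x(s),u(s)\big)\,ds+\int_{0}^{\tau}e^{-\beta s}h\big(X_x(s)\big)\,dl_x(s)+e^{-\beta\tau}J^{\beta}_{X_x(\tau)}\big(\widetilde u\big)\Bigg],
\]
where $\widetilde u$ is the restriction of $u$ to $[\tau,+\infty)$.

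For the easy inequality $W_2\big(x\big)\leqslant v^{\beta}\big(x\big)$, I would fix arbitrary $u$ and $\tau$ and insert the suboptimality bound $v^{\beta}\big(X_x(\tau)\big)=\inf_{u'}J^{\beta}_{X_x(\tau)}\big(u'\big)\leqslant J^{\beta}_{X_x(\tau)}\big(\widetilde u\big)$ into the cocycle identity. This yields $G\big(x,u,\tau\big)\leqslant J^{\beta}_x\big(u\big)$ for \emph{every} $\tau$, so taking the supremum over $\tau$ and then the infimum over $u$ gives $W_2\big(x\big)=\inf_u\sup_\tau G\big(x,u,\tau\big)\leqslant\inf_u J^{\beta}_x\big(u\big)=v^{\beta}\big(x\big)$.

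The hard inequality $v^{\beta}\big(x\big)\leqslant W_1\big(x\big)$ reduces to showing $v^{\beta}\big(x\big)\leqslant G\big(x,u,\tau\big)$ for every fixed $u$ and $\tau$, and the strategy is a pasting (measurable selection) argument. Given $\epsilon>0$, I would use the continuity of $v^{\beta}$ (Theorem~\ref{tm:continuity_optimal_value_function}) and compactness of $\mathbb{X}$ to partition $\mathbb{X}$ into finitely many Borel sets $A_1,\dots,A_N$ of small diameter on each of which $v^{\beta}$ oscillates by less than $\epsilon$, pick $y_i\in A_i$, and choose near-optimal controls $u^{y_i}$ with $J^{\beta}_{y_i}\big(u^{y_i}\big)\leqslant v^{\beta}\big(y_i\big)+\epsilon$. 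Refining the partition and invoking the equicontinuity of the family $\big(x\mapsto J^{\beta}_x\big(u\big)\big)_{u\in\mathcal{U}}$ (Lemma~\ref{lm:continuity_functional_cost_initial_condition}) guarantees $J^{\beta}_y\big(u^{y_i}\big)\leqslant v^{\beta}\big(y\big)+C\epsilon$ for all $y\in A_i$, uniformly in $i$. I would then define the pasted control $\widehat u$ to equal $u$ on $[0,\tau)$ and, on the event $\{X_x(\tau)\in A_i\}$, to follow $u^{y_i}$ restarted at time $\tau$ with the shifted Brownian motion $W(\tau+\cdot)-W(\tau)$; this $\widehat u$ is admissible because the shifted increments form a Brownian motion independent of $\mathcal{F}_\tau$ and the selection over a finite partition preserves progressive measurability. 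Applying the cocycle identity to $\widehat u$ together with the uniform bound gives $v^{\beta}\big(x\big)\leqslant J^{\beta}_x\big(\widehat u\big)\leqslant G\big(x,u,\tau\big)+C\epsilon\,\mathbb{E}\big[e^{-\beta\tau}\big]\leqslant G\big(x,u,\tau\big)+C\epsilon$; letting $\epsilon\downarrow0$ and taking the infimum over $u$ and $\tau$ yields $v^{\beta}\big(x\big)\leqslant W_1\big(x\big)$, closing the chain.

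The main obstacle is the rigorous construction and verification of the pasted control $\widehat u$: one must check that gluing the shifted near-optimal controls along the finite partition produces a progressively measurable process, and that the flow property of the reflected SDE \eqref{eq:sde_reflection_control} identifies the solution restarted from $X_x(\tau)$ (both state and local time) with the original pair on $[\tau,+\infty)$, so that the boundary cost $\int_{0}^{\cdot}e^{-\beta s}h\,dl_x$ transforms correctly under the conditional-expectation formula. The continuity of $v^{\beta}$ and the equicontinuity of $J^{\beta}$ established earlier are exactly what render the discretization error uniform, allowing the passage from a pointwise near-optimal selection to a genuinely admissible control on the whole domain.
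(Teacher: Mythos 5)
Your proposal is correct and follows essentially the same route as the paper's proof: the same cocycle identity obtained by splitting $J^{\beta}_x\big(u\big)$ at the stopping time $\tau$, the same suboptimality bound $v^{\beta}\big(X_x(\tau)\big)\leqslant J^{\beta}_{X_x(\tau)}\big(\widetilde{u}\big)$ for the inequality $\inf_u\sup_\tau\leqslant v^{\beta}$, and the same pasting of near-optimal controls at $\tau$ for the reverse inequality. The only difference is one of detail: where the paper asserts the existence of a single $\rho$-optimal control $\widetilde{u}^{\rho}$ valid $\mathbb{P}$-a.s.\ at the random state $X_x(\tau)$ and defers the admissibility of the pasted control to cited measurable-selection theorems, you make that selection explicit via the standard finite-partition argument based on Theorem~\ref{tm:continuity_optimal_value_function} and Lemma~\ref{lm:continuity_functional_cost_initial_condition}.
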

\begin{proof}  
	According to hypotheses $\textbf{H}_1$, $\textbf{H}_2$ and $\textbf{H}_3$, the three processes that appear in equations \eqref{eq:dynamic_programming_principle_inf_inf} and \eqref{eq:dynamic_programming_principle_inf_sup} are measurable and bounded, so the expectation is well defined. Given $x\in\mathbb{X}$ and $u\in\mathcal{U}$, it follows from the conditional expectation property that
	\begin{align*}
		J^{\beta}_x\big(u\big)=\mathbb{E}\Bigg[&\int_{0}^{+\infty}e^{-\beta s}L\big(X_x(s),u(s)\big)\,ds +\int_{0}^{+\infty}e^{-\beta s}h\big(X_x(s)\big)\,dl_x(s)\Bigg]\\
		=\mathbb{E}\Bigg[&\int_{0}^{\tau}e^{-\beta s}L\big(X_x(s),u(s)\big)\,ds+\int_{0}^{\tau}e^{-\beta s}h\big(X_x(s)\big)\,dl_x(s)\\ 
		&+e^{-\beta \tau}\mathbb{E}\Bigg[\int_{\tau}^{+\infty}e^{-\beta (s-\tau)}L\big(X_x(s),u(s)\big)\,ds+\int_{\tau}^{+\infty}e^{-\beta (s-\tau)}h\big(X_x(s)\big)\,dl_x(s)\Bigg|\mathcal{F}_{\tau}\Bigg]\Bigg]\\
		=\mathbb{E}\Bigg[&\int_{0}^{\tau}e^{-\beta s}L\big(X_x(s),u(s)\big)\,ds+\int_{0}^{\tau}e^{-\beta s}h\big(X_x(s)\big)\,dl_x(s)\\ 
		&+e^{-\beta \tau}\mathbb{E}\Bigg[\int_{\tau}^{+\infty}e^{-\beta (s-\tau)}L\big(X_{X_x(\tau)}(s),u(s)\big)\,ds+\int_{\tau}^{+\infty}e^{-\beta (s-\tau)}h\big(X_{X_x(\tau)}(s)\big)\,dl_{X_x(\tau)}(s)\Bigg|\mathcal{F}_{\tau}\Bigg]\Bigg]\\
		=\mathbb{E}\Bigg[&\int_{0}^{\tau}e^{-\beta s}L\big(X_x(s),u(s)\big)\,ds+\int_{0}^{\tau}e^{-\beta s}h\big(X_x(s)\big)\,dl_x(s) +e^{-\beta \tau}J^{\beta}_{X_x(\tau)}\big(\widetilde{u}\big)\Bigg]
	\end{align*}
	where $\widetilde{u}$ is the restriction of the control $u$ to the interval $[\tau,+\infty)$. Since $v^{\beta}\big(X_x(\tau)\big)\leqslant J^{\beta}_{X_x(\tau)}\big(\widetilde{u}\big)$ $\mathbb{P}$-a.s, we have that 
	\begin{equation*}
		J^{\beta}_x\big(u\big)\geqslant\mathbb{E}\Bigg[\int_{0}^{\tau}e^{-\beta s}L\big(X_x(s),u(s)\big)\,ds +\int_{0}^{\tau}e^{-\beta s}h\big(X_x(s)\big)\,dl_x(s)+e^{-\beta \tau}v^{\beta}\big(X_x(\tau)\big)\Bigg].
	\end{equation*}
	Given that $\tau\in \mathcal{T}$ is arbitrary 
	\begin{equation*}
		J^{\beta}_{x}\big(u\big)\geqslant\sup_{\tau\in \mathcal{T}}\mathbb{E}\Bigg[\int_{0}^{\tau}e^{-\beta s}L\big(X_x(s),u(s)\big)\,ds +\int_{0}^{\tau}e^{-\beta s}h\big(X_x(s)\big)\,dl_x(s)+e^{-\beta \tau}v^{\beta}\big(X_x(\tau)\big)\Bigg].
	\end{equation*}
	Taking the infimum on both sides with respect to the controls, we get 
	\begin{equation*}
		v^{\beta}\big(x\big)\geqslant\inf_{u\in \mathcal{U}}\sup_{\tau\in \mathcal{T}}\mathbb{E}\Bigg[\int_{0}^{\tau}e^{-\beta s}L\big(X_x(s),u(s)\big)\,ds+\int_{0}^{\tau}e^{-\beta s}h\big(X_x(s)\big)\,dl_x(s) +e^{-\beta \tau}v^{\beta}\big(X_x(\tau)\big)\Bigg].
	\end{equation*}    	
	
	On the other hand, given a control $u\in\mathcal{U}$ and a stopping time $\tau\in\mathcal{T}$, we have by the definition of the optimal value function \eqref{eq:optimal_value_function} that given $\rho>0$ there exists $\widetilde{u}^{\rho}\in\mathcal{U}$ such that
	\begin{equation}
		\label{eq:auxiliary_equation_01}
		v^{\beta}\big(X^{u}_x(\tau)\big)+\rho\geqslant J^{\beta}_{X^{u}_x(\tau)}\big(\widetilde{u}^{\rho}\big)\quad \mathbb{P}\text{-a.s}.
	\end{equation} 
	Define the control $\big(\hat{u}(t)\big)_{t\geqslant0}$ by
	\begin{equation}
		\label{eq:auxiliary_equation_02}
		\hat{u}(t):=u(t)\mathds{1}_{[0,\tau]}(t)+\widetilde{u}^{\rho}(t)\mathds{1}_{(\tau,+\infty)}(t).
	\end{equation}
   The measurability of the control defined in \eqref{eq:auxiliary_equation_02} is a subtle and nontrivial matter. A detailed treatment of this issue can be found in the literature; see, for instance, \cite[Chapter 3]{Pham}. However, by applying a measurable selection theorem --- such as those presented in \cite[Chapter 4]{Soner} or, more directly, in \cite[Appendix B]{Rishel} --- one can rigorously establish that the control in question admits a progressively measurable version. As a result, it belongs to the class $\mathcal{U}$.
	
    Therefore, by standard properties of the conditional expectation, it follows that
	\begin{equation*}
		v^{\beta}\big(x\big)\leqslant J^{\beta}_x\big(\hat{u}\big)=\mathbb{E}\Bigg[\int_{0}^{\tau}e^{-\beta s}L\big(X_x(s),u(s)\big)\,ds +\int_{0}^{\tau}e^{-\beta s}h\big(X_x(s)\big)\,dl_x(s)+e^{-\beta \tau}J^{\beta}_{X_x(\tau)}\big(\widetilde{u}^{\rho}\big)\Bigg].
	\end{equation*}
	From \eqref{eq:auxiliary_equation_01}, it follows that
	\begin{equation*}
		v^{\beta}\big(x\big)\leqslant\mathbb{E}\Bigg[\int_{0}^{\tau}e^{-\beta s}L\big(X_x(s),u(s)\big)\,ds
		+\int_{0}^{\tau}e^{-\beta s}h\big(X_x(s)\big)\,dl_x(s)+e^{-\beta \tau}v^{\beta}\big(X_x(\tau)\big)\Bigg]+\rho.
	\end{equation*}
	Taking the infimum in relation to $\tau$ and $u$ (since both are arbitrary), we get
	\begin{equation*}
		v^{\beta}\big(x\big)\leqslant\inf_{u\in \mathcal{U}}\inf_{\tau\in \mathcal{T}}\mathbb{E}\Bigg[\int_{0}^{\tau}e^{-\beta s}L\big(X_x(s), u(s)\big)\,ds+\int_{0}^{\tau}e^{-\beta s}h\big(X_x(s)\big)\,dl_x(s)+e^{-\beta \tau}v^{\beta}\big(X_x(\tau)\big)\Bigg]+\rho.
	\end{equation*}
	Since $\rho>0$ is arbitrary
	\begin{equation*}
		v^{\beta}\big(x\big)\leqslant\inf_{u\in \mathcal{U}}\inf_{\tau\in \mathcal{T}}\mathbb{E}\Bigg[\int_{0}^{\tau}e^{-\beta s}L\big(X_x(s), u(s)\big)\,ds+\int_{0}^{\tau}e^{-\beta s}h\big(X_x(s)\big)\,dl_x(s)+e^{-\beta \tau}v^{\beta}\big(X_x(\tau)\big)\Bigg].
	\end{equation*} 
	Then the following inequalities hold:
	\begin{align*}
		v^{\beta}\big(x\big)\leqslant&\inf_{u\in \mathcal{U}}\inf_{\tau\in \mathcal{T}}\mathbb{E}\Bigg[\int_{0}^{\tau}e^{-\beta s}L\big(X_x(s), u(s)\big)ds+\int_{0}^{\tau}e^{-\beta s}h\big(X_x(s)\big)dl_x(s)+e^{-\beta \tau}v^{\beta}\big(X_x(\tau)\big)\Bigg]\\
		\leqslant&\inf_{u\in \mathcal{U}}\sup_{\tau\in \mathcal{T}}\mathbb{E}\Bigg[\int_{0}^{\tau}e^{-\beta s}L\big(X_x(s), u(s)\big)ds+\int_{0}^{\tau}e^{-\beta s}h\big(X_x(s)\big)dl_x(s)+e^{-\beta \tau}v^{\beta}\big(X_x(\tau)\big)\Bigg]\leqslant v^{\beta}\big(x\big).
	\end{align*}
\end{proof}
\subsection{Viscosity Solutions for the Hamilton--Jacobi--Bellman Equation with Neumann Boundary Condition}
\label{subsec:viscosity_solution_HJB_equation_von_Neumann_boundary_condition}
The HJB equation associated with the control problem under consideration is a fully nonlinear, second-order Partial Differential Equation (PDE). It can be formally interpreted as the infinitesimal counterpart of the DPP \eqref{subeq:dynamic_programming_principle}; see, for example, \cite[Chapter 5, Theorem 3.1]{Soner}. To derive this equation from the DPP, one typically applies Itô’s formula to the optimal value function \eqref{eq:optimal_value_function}, which requires, as an additional assumption, that $v^\beta \in \text{C}^2\big(\mathbb{X}\big)$. Moreover, the derivation presumes the existence of optimal controls of Markovian type. Hence, this derivation should be regarded as heuristic.

The HJB equation corresponding to the control problem is given by
\begin{equation}
	\label{eq:pde_hamilton_jacobi_bellman}
	\beta v^{\beta}\big(x\big) - \mathcal{H}\Big(x, D_x v^{\beta}\big(x\big), D^2_x v^{\beta}\big(x\big) \Big) = 0, \quad \forall x \in \mathbb{X},
\end{equation}
subject to the following Neumann-type boundary condition:
\begin{equation}
	\label{eq:von_Neumann_boundary_condition}
	\big\langle D_xv^{\beta}\big(x\big), D_x\phi\big(x\big)\big\rangle=h\big(x\big)\quad \forall x\in \partial \mathbb{X}.
\end{equation}
where the Hamiltonian $\mathcal{H}: \mathbb{X}\times\mathbb{R}^{d}\times \mathbb{S}^{d}\big(\mathbb{R}\big)\to\mathbb{R}$ is thus given 
\begin{equation}
	\label{eq:Hamiltonian}
	\mathcal{H}\Big(x,g_x,H_x\Big):=\inf_{u\in \mathbb{U}}\Bigg\{\Big\langle\mu_{X}\big(x,u\big),g_x\Big\rangle +\frac{1}{2}\text{Tr}\Big(H_x\big[\sigma_{X}\sigma^{\top}_{X}\big]\big(x,u\big)\Big)+L\big(x,u\big)\Bigg\}
\end{equation}
and $\mathbb{S}^d\big(\mathbb{R}\big)$ denotes the space of symmetric $d \times d$ real matrices.

Due to the absence of structural assumptions on the diffusion matrix $\sigma_X$ in the controlled SDE \eqref{eq:sde_reflection_control}, the resulting HJB equation is in general degenerate elliptic. This degeneracy excludes the existence of classical solutions or even strong (Sobolev) solutions in many cases. The presence of the boundary condition \eqref{eq:von_Neumann_boundary_condition} further complicates the analysis and hinders the applicability of standard PDE techniques.

For this reason, until the late 1970s, most research on SOC focused on problems with uniformly elliptic, control-independent diffusion matrices, for which the associated HJB equations are semilinear and uniformly elliptic. In this setting, the existence of classical solutions can often be established; see, for instance, \cite[Theorem 4.4.3]{Ghosh}. Around the same time, progress was made on fully nonlinear, uniformly elliptic HJB equations under stronger regularity assumptions, using the notion of strong (Sobolev) solutions; see Theorems 1 and 2 (along with their hypotheses) in \cite[Chapter 4]{Krylov}.

A major advance came in the early 1980s with the introduction of the theory of viscosity solutions by Crandall, Ishii and Lions; see \cite{Crandall}. This framework, which applies to fully nonlinear, possibly degenerate elliptic (and parabolic) equations, only requires continuity of the solution and thus allows for a rigorous connection between SOC problems --- whose value functions are typically only continuous --- and their associated HJB equations. For historical context, see,  \cite[Chapter 4]{Zhou}.

We now reformulate the PDE system \eqref{eq:pde_hamilton_jacobi_bellman}–\eqref{eq:von_Neumann_boundary_condition} in a form suitable for the application of viscosity solution theory. Accordingly, throughout the remainder of this section, we focus on the following second-order, nonlinear PDE:
\begin{subequations}
	\label{subeq:pde_hjb_and_von_Neumann_boundary_condition}
	\begin{align}
		\label{eq:pde_hjb}
		F^{\beta}\Big(x,v^{\beta}\big(x\big),D_{x}v^{\beta}\big(x\big),D^2_{x}v^{\beta}\big(x\big)\Big)&=0\quad\forall x\in \mathbb{X},\\
		\label{eq:pde_hjb_von_Neumann_boundary_condition}
		\Gamma\Big(x,D_xv^{\beta}\big(x\big)\Big)&=0\quad\forall x\in\partial \mathbb{X},
	\end{align}
\end{subequations}
where $F^{\beta}:\mathbb{X}\times\mathbb{R}\times\mathbb{R}^{d}\times\mathbb{S}^{d}\big(\mathbb{R}\big)\to\mathbb{R}$ is defined by
\begin{equation*}
	F^{\beta}\Big(x,r,g_x,H_x\Big):=\beta r-\mathcal{H}\Big(x,g_x,H_x\Big),
\end{equation*}
with Hamiltonian $\mathcal{H}$ defined in \eqref{eq:Hamiltonian}, $h:\partial \mathbb{X}\to\mathbb{R}$ satisfying hypothesis $\textbf{H}_3$, $\phi\in \text{C}^2_{\text{b}}\big(\mathbb{R}^d\big)$ and the set $\mathbb{X}$ satisfying the conditions listed in the introduction of the Section \S\ref{sec:stochastic_optimal_control_restrictions_state_variables}. In addition, we define $\Gamma:\partial \mathbb{X}\times\mathbb{R}^d\to\mathbb{R}$ to represent the boundary condition \eqref{eq:von_Neumann_boundary_condition} 
\begin{equation*}
	\Gamma\big(x,g_x\big):=\Big\langle g_x,D_x\phi\big(x\big)\Big\rangle-h\big(x\big).
\end{equation*}
The definition of viscosity solutions adopted in this work is a slight modification of the one presented in \cite[Chapter 16]{Kushner}, where we use test functions of class $\text{C}^{\infty}\big(\mathbb{X}\big)$ instead of test functions of class $\text{C}^2\big(\mathbb{X}\big)$.
\begin{remark}
	$\text{USC}\big(\mathbb{X}\big)$ denotes the set of upper semi-continuous functions, and $\text{LSC}\big(\mathbb{X}\big)$ denotes the set of lower semi-continuous functions.
\end{remark}
\begin{definition}[Viscosity Solutions]
	\label{df:viscosity_solutions}
	\text{ }
	\begin{itemize}
		\item $w\in \emph{USC}\big(\mathbb{X}\big)$ is an \emph{viscosity subsolution}, if for every $\varphi\in \emph{C}^{\infty}\big(\mathbb{X}\big)$ such that, the application $x\mapsto\big(w-\varphi\big)\big(x\big)$ has a local maximum at the point $x\in \mathbb{X}$ with $w\big(x\big)=\varphi\big(x\big)$, the following inequalities hold:
		\begin{subequations}
			\label{subeq:viscosity_subsolution}
			\begin{align}
				\label{eq:inner_boundary_viscosity_subsolution}
				F^{\beta}\Big(x,\varphi\big(x\big),D_{x}\varphi\big(x\big),D^2_{x}\varphi\big(x\big)\Big)&\leqslant0\quad x\in \mathbb{X},\\
				\label{eq:boundary_viscosity_subsolution}
				F^{\beta}\Big(x,\varphi\big(x\big),D_x\varphi\big(x\big), D^2_x\varphi\big(x\big)\Big)\wedge\Gamma\Big(x,D_x\varphi\big(x\big)\Big)&\leqslant0\quad x\in\partial \mathbb{X}.
			\end{align}
		\end{subequations}	
		\item $w\in \emph{LSC}\big(\mathbb{X}\big)$ is an \emph{viscosity supersolution}, if for every $\varphi\in \emph{C}^{\infty}\big(\mathbb{X}\big)$ such that, the application $x\mapsto\big(w-\varphi\big)\big(x\big)$ has a local minimum at the point $x\in \mathbb{X}$ with $w\big(x\big)=\varphi\big(x\big)$, the following inequalities hold:
		\begin{subequations}
			\label{subeq:viscosity_supersolution}
			\begin{align}
				\label{eq:inner_boundary_viscosity_supersolution}
				F^{\beta}\Big(x,\varphi\big(x\big),D_x\varphi\big(x\big), D^2_x\varphi\big(x\big)\Big)&\geqslant0\quad x\in \mathbb{X},\\
				\label{eq:boundary_viscosity_supersolution}
				F^{\beta}\Big(x,\varphi\big(x\big),D_x\varphi\big(x\big),
				D^2_x\varphi\big(x\big)\Big)\lor\Gamma\Big(x,D_x\varphi\big(x\big)\Big)&\geqslant0\quad x\in\partial \mathbb{X}.
			\end{align}
		\end{subequations}
		\item $w\in \emph{C}\big(\mathbb{X}\big)$ is an \emph{viscosity solution} if it is at the same time a subsolution and a supersolution. 
	\end{itemize}
\end{definition}
 We aim to show that the optimal value function \eqref{eq:optimal_value_function} is a viscosity solution of the HJB equation \eqref{eq:pde_hamilton_jacobi_bellman}. The proof is carried out in two steps. First, we establish that the optimal value function is a viscosity subsolution. To this end, we introduce an auxiliary lemma that ensures the existence of Lipschitz continuous Markovian controls under suitable conditions.

In the second step, we prove that the optimal value function is also a viscosity supersolution. Although the overall structure of the argument --- based on a contradiction --- is similar to the subsolution case, additional technical considerations are required in this part of the proof.
\begin{lemma}
	\label{lm:Lipschitz_controls} 
	Consider a set $\mathbb{U}\subset \mathbb{R}^m$ and a function $\theta:\mathbb{X}\times \mathbb{U}\to\mathbb{R}$ satisfying the following hypotheses:
	\begin{itemize}
		\item $\mathbb{U}$ is a convex and compact set;
		\item $\theta\big(x,\cdot\big)\in \emph{C}^2\big(\mathbb{U}\big)$ for all $x\in \mathbb{X}$;
		\item $D_u\theta\big(\cdot,u\big)$ is Lipschitz continuous uniformly in $u\in \mathbb{U}$;
		\item $D^2_u\theta$ is positive definite uniformly on $(x,u)\in\mathbb{X}\times \mathbb{U}$.
	\end{itemize}
	Then the function 
	\begin{equation}
		\label{eq:measurable_function_markovian_optimal_control}
		u^{*}\big(x\big)\in \arg\min_{u\in \mathbb{U}}\theta\big(x,u\big)
	\end{equation}
	is Lipschitz continuous.
\end{lemma}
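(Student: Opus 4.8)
The plan is to exploit the uniform strong convexity of $\theta$ in the control variable to obtain both uniqueness of the minimizer and a quantitative stability estimate. Since $D^2_u\theta$ is positive definite uniformly on $\mathbb{X}\times\mathbb{U}$, there exists a constant $\alpha>0$ with $\langle D^2_u\theta(x,u)\xi,\xi\rangle\geqslant\alpha\|\xi\|^2$ for all $\xi\in\mathbb{R}^m$ and all $(x,u)$. For fixed $x$, this makes $\theta(x,\cdot)$ strictly convex on the convex set $\mathbb{U}$, so the minimizer in \eqref{eq:measurable_function_markovian_optimal_control} is unique and $u^{*}$ is a well-defined function. Integrating the Hessian along the segment joining two controls --- which remains in $\mathbb{U}$ by convexity --- yields the gradient monotonicity estimate
\[
\langle D_u\theta(x,a)-D_u\theta(x,b),\,a-b\rangle\geqslant\alpha\|a-b\|^2\qquad\forall\,a,b\in\mathbb{U}.
\]

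First I would characterize $u^{*}(x)$ through the first-order optimality condition for constrained minimization over a convex set: since $\mathbb{U}$ is convex, $u^{*}(x)$ is the minimizer if and only if the variational inequality $\langle D_u\theta(x,u^{*}(x)),\,v-u^{*}(x)\rangle\geqslant0$ holds for every $v\in\mathbb{U}$. Fixing $x_1,x_2\in\mathbb{X}$ and writing $u_1=u^{*}(x_1)$, $u_2=u^{*}(x_2)$, I would apply this inequality at $x_1$ with test point $v=u_2$ and at $x_2$ with test point $v=u_1$, then add the two resulting inequalities to obtain
\[
\langle D_u\theta(x_2,u_2)-D_u\theta(x_1,u_1),\,u_2-u_1\rangle\leqslant0.
\]

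The final step is to split this difference through the intermediate term $D_u\theta(x_2,u_1)$. The contribution $\langle D_u\theta(x_2,u_2)-D_u\theta(x_2,u_1),\,u_2-u_1\rangle$ is bounded below by $\alpha\|u_2-u_1\|^2$ via the monotonicity estimate above, while the cross term is controlled using that $D_u\theta(\cdot,u)$ is Lipschitz in $x$ uniformly in $u$, say with constant $L$, giving $|\langle D_u\theta(x_2,u_1)-D_u\theta(x_1,u_1),\,u_2-u_1\rangle|\leqslant L\|x_2-x_1\|\,\|u_2-u_1\|$ by Cauchy--Schwarz. Combining these with the displayed inequality yields $\alpha\|u_2-u_1\|^2\leqslant L\|x_2-x_1\|\,\|u_2-u_1\|$, hence $\|u^{*}(x_2)-u^{*}(x_1)\|\leqslant(L/\alpha)\|x_2-x_1\|$, which is the desired Lipschitz bound. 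I expect the main subtlety to lie in correctly invoking the variational-inequality form of optimality --- rather than the unconstrained stationarity $D_u\theta=0$ --- so that the argument remains valid when the minimizer lies on the boundary of $\mathbb{U}$; the convexity of $\mathbb{U}$ is essential both for this characterization and for the segment argument underlying the monotonicity estimate.
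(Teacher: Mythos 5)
Your proposal is correct, and it reaches the same constant $L/\alpha$ (the paper's $C_0/\varkappa$) as the paper, but the middle of the argument is organized differently. The paper works at the level of function values: it Taylor-expands $\theta(x_1,\cdot)$ around $u_1$ with Lagrange remainder to get $\theta(x_1,u_2)-\theta(x_1,u_1)\geqslant\tfrac{\varkappa}{2}\|u_2-u_1\|^2$ (and symmetrically at $x_2$), then rewrites each function-value difference as $\int_0^1\langle D_u\theta(x_i,u_1+s(u_2-u_1)),u_2-u_1\rangle\,ds$, adds the two inequalities, and applies Cauchy--Schwarz together with the Lipschitz dependence of $D_u\theta$ on $x$ under the integral. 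You instead stay entirely at the gradient level: you add the two variational inequalities to get $\langle D_u\theta(x_2,u_2)-D_u\theta(x_1,u_1),u_2-u_1\rangle\leqslant0$ and then split through $D_u\theta(x_2,u_1)$, using strong monotonicity of $D_u\theta(x_2,\cdot)$ for one piece and the Lipschitz-in-$x$ hypothesis for the other. The two routes are equivalent in substance, but yours has one genuine advantage of exposition: it makes explicit the constrained first-order optimality condition $\langle D_u\theta(x_1,u_1),v-u_1\rangle\geqslant0$ for $v\in\mathbb{U}$, which the paper uses only implicitly when it drops the term $\langle D_u\theta(x_1,u_1),u_2-u_1\rangle$ from the Taylor expansion with the phrase ``since $u_1$ is the minimum point''; that step is exactly the variational inequality you invoke, and spelling it out is what guarantees the argument survives when the minimizer sits on $\partial\mathbb{U}$.
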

\begin{proof}
	From item 4, $\theta\big(x,\cdot\big)$ is a strictly convex function uniformly on $x\in\mathbb{X}$. This fact, together with item 1, guarantees that there is only one function $u^{*}$ satisfying the problem \eqref{eq:measurable_function_markovian_optimal_control}.  
	Denote $u_1:=u^{*}\big(x_1\big)$ and $u_2:=u^{*}\big(x_2\big)$. From item 2, it follows from Taylor expansion with Lagrange remainder that there exists a point $(\widetilde{x},\widetilde{u})\in \mathbb{X}\times \mathbb{U}$ such that
	\begin{equation*}
		\theta\big(x_1,u_2\big)=\theta\big(x_1,u_1+\big(u_2-u_1\big)\big)=\theta\big(x_1,u_1\big)+\bigg\langle D_u\theta\big(x_1,u_1\big),\big(u_2-u_1\big)\bigg\rangle+\frac{1}{2}\bigg\langle \big(u_2-u_1\big),D^2_u\theta\big(\widetilde{x},\widetilde{u}\big)\big(u_2-u_1\big)\bigg\rangle. 
	\end{equation*}
	From item 4 there is a constant $\varkappa>0$ such that 
	\begin{equation*}
		\theta\big(x_1,u_2\big)\geqslant\theta\big(x_1,u_1\big)+\bigg\langle D_u\theta\big(x_1,u_1\big),\big(u_2-u_1\big)\bigg\rangle+\frac{\varkappa}{2}\|u_2-u_1\|^2. 
	\end{equation*}
	Since $u_1$ is the minimum point for the application $u\mapsto\theta\big(x_1,\cdot\big)$, it follows that 
	\begin{equation*}
		\theta\big(x_1,u_2\big)-\theta\big(x_1,u_1\big)\geqslant\frac{\varkappa}{2}\|u_2-u_1\|^2. 
	\end{equation*}
	By the convexity of $\mathbb{U}$, the line $[0,1]\ni s\mapsto u_1+s\big(u_2-u_1\big)\in \mathbb{U}$. Let $\mathfrak{P}:\mathbb{X}\times [0,1]\to \mathbb{X}\times \mathbb{U}$ be defined by
	\begin{equation*}
		\mathfrak{P}\big(x,s\big):=\big(x,u_1+s\big(u_2-u_1\big)\big).
	\end{equation*}
	Then, 
	\begin{equation*}
		\int_{0}^{1}\bigg\langle D_u\theta\big(\mathfrak{P}\big(x_1,s\big)\big), u_2-u_1\bigg\rangle ds\geqslant\frac{\varkappa}{2}\|u_2-u_1\|^2.
	\end{equation*}
	Similarly, by changing the roles of $x_1$ and $x_2$, we get that 
	\begin{equation*}
		-\int_{0}^{1}\bigg\langle D_u\theta\big(\mathfrak{P}\big(x_2,s\big)\big), u_2-u_1\bigg\rangle ds\geqslant\frac{\varkappa}{2}\|u_2-u_1\|^2.
	\end{equation*}
	Adding up the inequalities above, we get 
	\begin{equation*}
		\int_{0}^{1}\bigg\langle D_u\theta\big(\mathfrak{P}\big(x_1,s\big)\big)-D_u\theta\big(\mathfrak{P}\big(x_2,s\big)\big), u_2-u_1\bigg\rangle ds\geqslant\varkappa\|u_2-u_1\|^2.
	\end{equation*}
	From the Cauchy-Schwarz inequality, it follows that 
	\begin{equation*}
		\int_{0}^{1}\| D_u\theta\big(\mathfrak{P}\big(x_1,s\big)\big)-D_u\theta\big(\mathfrak{P}\big(x_2,s\big)\big)\|ds\geqslant\varkappa\|u_2-u_1\|.
	\end{equation*}
	By item 3, $D_u\theta\big(\cdot,u\big)$ is Lipschitz continuous uniformly in $u\in \mathbb{U}$. Therefore, there exists a constant $C_0>0$ (independent of $u\in \mathbb{U}$) such that 
	\begin{equation*}
		\|D_u\theta\big(\mathfrak{P}\big(x_1,s\big)\big)-D_u\theta\big(\mathfrak{P}\big(x_2,s\big)\big)\|\leqslant C_0\|x_1-x_2\|\quad \forall s\in [0,1].
	\end{equation*}
	Combining the above inequality with the previous one, we obtain that 
	\begin{equation*}
		\|u^{*}\big(x_2\big)-u^{*}\big(x_1\big)\|\leqslant \frac{C_0}{\varkappa}\|x_2-x_1\|.
	\end{equation*}
\end{proof}
\begin{remark}
	\label{rm:remark_01}
	Given $\varphi \in \text{C}^{\infty}\big(\mathbb{X}\big)$, consider the function $\theta_{\varphi}:\mathbb{X}\times\mathbb{U}\to\mathbb{R}$ defined by
	\begin{equation}
		\label{eq:function_theta_varphi}
		\theta_{\varphi}\big(x, u\big) := \mathcal{L}^{u}_X \varphi\big(x\big) + L\big(x, u\big),
	\end{equation}
	where $\mathcal{L}^{u}_X$ denotes the second-order differential operator introduced in \eqref{eq:auxiliary_equation_08}.
	
	A concrete setting in which Hypotheses 1–4 of Lemma \ref{lm:Lipschitz_controls} are satisfied for the function $\theta_{\varphi}$ defined above occurs under the following structural conditions:
	\begin{itemize}
		\item[5.1] $\mu_X\big(x,\cdot\big)$, $\sigma_X\big(x,\cdot\big)$ and $L\big(x,\cdot\big)$ are of class $\text{C}^{2}\big(\mathbb{U}\big)$ for all $x\in\mathbb{X}$. In addition, the functions $D_{xu}\mu_X$ and $D_{xu}\sigma_X$ are of class $\text{C}\big(\mathbb{X}\times\mathbb{U}\big)$. 
		\item[5.2] $D^2_u\mu_X\equiv 0, \quad D^2_u\big[\sigma_{X}\sigma^{\top}_{X}\big]\equiv 0 \quad \text{and}\quad D^2_uL\succ 0$.
	\end{itemize}
	
	From Hypothesis 5.2, the hessian of the function \eqref{eq:function_theta_varphi} is given by $D^2_u\theta_{\varphi}=D^2_uL$ and is positive definite as in item 4. From hypothesis 5.1, it follows from the mean value theorem that 
	\begin{equation*}
		\|D_u\mu_X\big(x_1,u\big)-D_u\mu_X\big(x_2,u\big)\|\leqslant \|D_{xu}\mu_X\big(\tilde{x},\tilde{u}\big)\|\|x_2-x_1\|,
	\end{equation*}
	where $(\widetilde{x},\widetilde{u})$ is a point on the line connecting the points $(x_1,u)$ and $(x_2,u)$. Since $D_{xu}\mu_X$ is continuous and $\mathbb{X}\times \mathbb{U}$ is compact, there is a constant $C>0$ such that 
	\begin{equation*}
		\|D_u\mu_X\big(x_1,u\big)-D_u\mu_X\big(x_2,u\big)\|\leqslant C\|x_2-x_1\|
	\end{equation*}
	with the same argument, valid for $D_u\big[\sigma_{X}\sigma^{\top}_{X}\big]$ and $D_uL$. Since these functions are Lipschitz and bounded, and $D_x\varphi$ and $D^2_x\varphi$ are also Lipschitz and bounded, it follows that 
	\begin{equation*}
		\|D_u\theta_{\varphi}\big(x_1,u\big)-D_u\theta_{\varphi}\big(x_2,u\big)\|\leqslant \widetilde{C}\|x_1-x_2\|.
	\end{equation*}
	By Hypothesis 5.1, item 2 holds. Furthermore, by Hypothesis $\textbf{H}_1$ we have that
	\begin{align*}
		\|\mu_X\big(x_1,u_1\big)-\mu_X\big(x_2,u_2\big)\|&\leqslant \|\mu_X\big(x_1,u_1\big)-\mu_X\big(x_2,u_1\big)\|+\|\mu_X\big(x_2,u_1\big)-\mu_X\big(x_2,u_2\big)\|\\
		&\leqslant C\|x_1-x_2\|+\|\mu_X\big(x_2,u_1\big)-\mu_X\big(x_2,u_2\big)\|
	\end{align*}
	where $C>0$ is a constant independent of $u\in \mathbb{U}$. Applying the mean value theorem with respect to the control coordinate to the second term of the above inequality, we have that 
	\begin{equation*}
		\|\mu_X\big(x_1,u_1\big)-\mu_X\big(x_2,u_2\big)\|\leqslant C\|x_1-x_2\|+\|D_u\mu_X\big(\widetilde{x},\widetilde{u}\big)\|\|u_1-u_2\|
	\end{equation*}
	where, by an abuse of notation, the pair $(\tilde{x},\tilde{u})$ also represents a point belonging to the line segment given by the points $(x_2,u_1)$ and $(x_2,u_2)$.  From Hypothesis 5.1 and the compactness of $\mathbb{X}\times \mathbb{U}$, there is a constant $\widetilde{C}>0$ such that 
	\begin{equation*}
		\|\mu_X\big(x_1,u_1\big)-\mu_X\big(x_2,u_2\big)\|\leqslant C\|x_1-x_2\|+\widetilde{C}\|u_1-u_2\|.
	\end{equation*}
	Therefore, when we add Hypothesis 5.1, the drift $\mu_X$ becomes Lipschitz continuous in the control coordinate.  The same reasoning applies to the dispersion $\sigma_X$ and the operation cost $L$. Thus, using Hypotheses 5.1 and 5.2 and Lemma \ref{lm:Lipschitz_controls} we obtain
	\begin{align*}
		\|\mu^{u^{*}}_X\big(x_1\big)-\mu^{u^{*}}_X\big(x_2\big)\|&=\|\mu_X\big(x_1,u^{*}\big(x_1\big)\big)-\mu_X\big(x_2,u^{*}\big(x_2\big)\big)\|\leqslant C\|x_1-x_2\|+\widetilde{C}\|u^{*}\big(x_1\big)+u^{*}\big(x_2\big)\|\\
		&\leqslant C\|x_1-x_2\|+\widetilde{C}\frac{C_0}{\varkappa}\|x_2-x_1\|=\Bigg(C+\widetilde{C}\frac{C_0}{\varkappa}\Bigg)\|x_1-x_2\|.
	\end{align*} 
	That is
	\begin{equation*}
		\|\mu^{u^{*}}_X\big(x_1\big)-\mu^{u^{*}}_X\big(x_2\big)\|\leqslant \Bigg(C+\widetilde{C}\frac{C_0}{\varkappa}\Bigg)\|x_1-x_2\|.
	\end{equation*}
	The same applies to the dispersion and the operation cost. 
	
	An important observation is that the examples provided in Section \S\ref{sec:examples} satisfy Hypotheses 5.1 and 5.2. We emphasize, however, that these conditions are merely sufficient to ensure the applicability of Lemma \ref{lm:Lipschitz_controls} to the function $\theta_{\varphi}$ defined in \eqref{eq:function_theta_varphi}. Therefore, in Theorem \ref{tm:existence_viscosity_solution_pde_hjb}, we will work under the following general hypothesis:
	
	\begin{itemize}
		\item[$\textbf{H}_4$] Structural assumptions on the drift, dispersion, and running cost functions that guarantee the validity of Lemma \ref{lm:Lipschitz_controls} for the function defined in \eqref{eq:function_theta_varphi}.
	\end{itemize}
\end{remark}
\begin{theorem}
	\label{tm:existence_viscosity_solution_pde_hjb}
	Assume that hypotheses $\emph{\textbf{H}}_1$ through $\emph{\textbf{H}}_4$ are satisfied. Then, the optimal value function defined in \eqref{eq:optimal_value_function} is a viscosity solution of the boundary value problem \eqref{eq:pde_hamilton_jacobi_bellman}–\eqref{eq:von_Neumann_boundary_condition}.. 
\end{theorem}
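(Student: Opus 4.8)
The plan is to verify separately the subsolution and supersolution inequalities of Definition~\ref{df:viscosity_solutions}, each by a localized contradiction argument resting on the Dynamic Programming Principle (Theorem~\ref{tm:dynamic_programming_principle}); since Theorem~\ref{tm:continuity_optimal_value_function} gives $v^\beta\in\text{C}(\mathbb{X})\subset\text{USC}(\mathbb{X})\cap\text{LSC}(\mathbb{X})$, only the two one-sided tests must be checked. The common engine is Itô's formula applied to $s\mapsto e^{-\beta s}\varphi(X^u_x(s))$ along the reflected dynamics \eqref{eq:sde_reflection_control}, which after taking expectations and discarding the martingale term yields, for any stopping time $\tau$,
\[
\mathbb{E}\big[e^{-\beta\tau}\varphi(X^u_x(\tau))\big]=\varphi(x)+\mathbb{E}\Big[\int_0^\tau e^{-\beta s}\big(\mathcal{L}^u_X\varphi-\beta\varphi\big)(X^u_x(s))\,ds\Big]-\mathbb{E}\Big[\int_0^\tau e^{-\beta s}\big\langle D_x\varphi,D_x\phi\big\rangle(X^u_x(s))\,dl^u_x(s)\Big].
\]
Combining this with the DPP in the sharp form $v^\beta(x)=\inf_u\mathbb{E}[\int_0^\tau(\ldots)+e^{-\beta\tau}v^\beta(X^u_x(\tau))]$, valid for every fixed $\tau$ (obtained by squeezing the inf–inf and inf–sup expressions in \eqref{subeq:dynamic_programming_principle}), converts both viscosity inequalities into statements about the signs of the integrands $\mathcal{L}^u_X\varphi+L-\beta\varphi$ and $h-\langle D_x\varphi,D_x\phi\rangle$ near the contact point.

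For the subsolution, let $\varphi\in\text{C}^\infty(\mathbb{X})$ touch $v^\beta$ from above at $x_0$ (so $v^\beta\leq\varphi$ nearby, $v^\beta(x_0)=\varphi(x_0)$) and suppose, toward a contradiction, that $F^\beta(x_0,\ldots)>0$, i.e. $\beta\varphi(x_0)>\mathcal{H}(x_0,D_x\varphi,D^2_x\varphi)$. Under $\textbf{H}_4$, Lemma~\ref{lm:Lipschitz_controls} applied to $\theta_\varphi$ in \eqref{eq:function_theta_varphi} supplies a Lipschitz Markovian minimizer $u^*(\cdot)$; by $\textbf{H}_1$ the coefficients are Lipschitz, so $u^*$ generates an admissible strong solution of \eqref{eq:sde_reflection_control} along which $\mathcal{L}^{u^*}_X\varphi+L-\beta\varphi=\mathcal{H}(\cdot,\ldots)-\beta\varphi$. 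This is strictly negative at $x_0$, hence on a neighborhood $N\subset\text{Int}(\mathbb{X})$ by continuity of the Hamiltonian. Taking $\tau$ to be the exit time from $N$ (so $dl^{u^*}_x\equiv0$ up to $\tau$), the identity together with $v^\beta\leq\varphi$ and the DPP bound $v^\beta(x_0)\leq\mathbb{E}[\int_0^\tau e^{-\beta s}L\,ds+e^{-\beta\tau}v^\beta(X^{u^*}(\tau))]$ forces $0\leq\mathbb{E}[\int_0^\tau e^{-\beta s}(\mathcal{H}-\beta\varphi)\,ds]<0$, a contradiction. When $x_0\in\partial\mathbb{X}$ the test only requires $F^\beta\wedge\Gamma\leq0$, so I additionally assume $\Gamma(x_0,D_x\varphi)>0$; then $h-\langle D_x\varphi,D_x\phi\rangle<0$ near $x_0$, the local-time integral contributes a further nonpositive term, and the same contradiction survives.

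For the supersolution, let $\varphi$ touch $v^\beta$ from below at $x_0$ and suppose $F^\beta(x_0,\ldots)<0$, i.e. $\beta\varphi(x_0)<\inf_{u\in\mathbb{U}}\{\mathcal{L}^u_X\varphi(x_0)+L(x_0,u)\}$. Here the gain is uniform: compactness of $\mathbb{U}$ and continuity furnish $\delta>0$ and a neighborhood $N$ with $\mathcal{L}^u_X\varphi(x)+L(x,u)-\beta\varphi(x)\geq\delta$ for every $u\in\mathbb{U}$ and $x\in N$, so the bound holds along every admissible trajectory irrespective of the control. For arbitrary $u\in\mathcal{U}$ and $\tau$ the exit time from $N$, the identity and $v^\beta\geq\varphi$ give $\mathbb{E}[\int_0^\tau e^{-\beta s}L\,ds+e^{-\beta\tau}v^\beta(X^u(\tau))]\geq\varphi(x_0)+\delta\,\mathbb{E}[\int_0^\tau e^{-\beta s}\,ds]$; taking the infimum over $u$ and invoking the DPP contradicts $v^\beta(x_0)=\varphi(x_0)$, provided $\inf_u\mathbb{E}[\int_0^\tau e^{-\beta s}\,ds]>0$. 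At the boundary the relaxed condition $F^\beta\vee\Gamma\geq0$ forces me to assume also $\Gamma(x_0,D_x\varphi)<0$; then $h-\langle D_x\varphi,D_x\phi\rangle>0$ near $x_0$, and the reflection term contributes a nonnegative amount, reinforcing the contradiction.

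The main obstacle is the supersolution estimate at the boundary. First, because the control enters a possibly degenerate $\sigma_X$, one cannot appeal to nondegeneracy to bound the occupation time, and the uniform positive lower bound $\inf_u\mathbb{E}[\int_0^\tau e^{-\beta s}\,ds]>0$ on the exit time from $N$ must instead be extracted from the uniform-in-$u$ bounds of $\textbf{H}_1$ and the moment estimates of Lemma~\ref{lm:inequality_continuity_control_reflection}, for instance by controlling $\mathbb{P}(\tau\le t)$ uniformly for small $t$. Second, the relaxed Neumann condition means the local-time term cannot simply be dropped: its sign must be pinned down through the hypothesis on $\Gamma$, and one must check that the reflection does not expel the process from $N$ on a time scale that destroys the lower bound on $\tau$. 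I expect these reflection-and-degeneracy interactions, rather than the interior algebra, to absorb the bulk of the technical effort.
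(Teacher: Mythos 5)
Your argument is correct in outline and follows essentially the same route as the paper: both parts proceed by contradiction at a boundary contact point, apply Itô's formula to $e^{-\beta s}\varphi\big(X(s)\big)$ along the reflected dynamics, invoke the DPP, and use the negation of the full boundary inequality to fix the signs of both the $ds$- and $dl$-integrands (note that assuming $\Gamma(x_0,D_x\varphi)>0$ in the subsolution case is not an extra hypothesis but exactly the negation of $F^{\beta}\wedge\Gamma\leqslant0$); the subsolution half, including the Lipschitz Markovian selector supplied by $\textbf{H}_4$ and Lemma~\ref{lm:Lipschitz_controls} --- which the paper additionally extends from $\overline{B(x_0,\delta)}\cap\mathbb{X}$ to all of $\mathbb{X}$ by a Lipschitz extension theorem so that the feedback control generates a strong solution --- is the paper's proof almost verbatim. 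The one genuine divergence is how the supersolution step is closed quantitatively. You squeeze the inf--inf and inf--sup forms of \eqref{subeq:dynamic_programming_principle} to get $v^{\beta}(x_0)=\inf_{u}\mathbb{E}[\cdots(\tau^{u})]$ along the control-dependent exit times, which then requires the uniform bound $\inf_{u}\mathbb{E}\big[\int_0^{\tau^{u}}e^{-\beta s}\,ds\big]>0$ that you flag as the main obstacle; the paper instead fixes an $\tfrac{\epsilon\rho}{2}$-optimal control $u^{\rho}$, caps the stopping time at $\theta^{\rho}=\rho\wedge\tau^{\rho}$, and shows $\rho^{-1}\mathbb{E}[\rho\wedge\tau^{\rho}]\to1$ as $\rho\to0$, so that the DPP error $\epsilon\rho/2$ is dominated by $\epsilon e^{-\beta\rho}\mathbb{E}[\rho\wedge\tau^{\rho}]$. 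Your variant is arguably cleaner, and the uniform bound you need does hold, but it comes from the same Chebyshev estimate the paper uses, $\mathbb{P}(\tau^{u}\leqslant t)\leqslant C(1+\|x_0\|^2)\,t\,e^{Ct}/\delta^{2}$ with $C$ independent of $u$ by $\textbf{H}_1$ --- not directly from the inequalities listed in Lemma~\ref{lm:inequality_continuity_control_reflection}, which control $\|X\|$ rather than the small-time increment $\sup_{r\leqslant t}\|X(r)-x_0\|$. Your final worry that the reflection could expel the process from the neighborhood on a damaging time scale is unfounded for the same reason: the exit time is defined through the reflected process itself and the small-time increment estimate already absorbs the local-time contribution.
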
 

The crucial step in the proof of Theorem \ref{tm:existence_viscosity_solution_pde_hjb} lies in verifying that the boundary inequalities \eqref{eq:boundary_viscosity_subsolution}–\eqref{eq:boundary_viscosity_supersolution} are satisfied. Indeed, within the interior of the domain $\mathbb{X}$, the viscosity sub- and supersolution properties follow from standard arguments, as detailed, for instance, in \cite[Chapter 4]{Pham}. Moreover, by Theorem \ref{tm:continuity_optimal_value_function}, the optimal value function defined in \eqref{eq:optimal_value_function} is continuous.
\begin{proof}[Proof: Viscosity Subsolution]
	The argument is by contradiction. Assume that there exists $\varphi\in \text{C}^{\infty}\big(\mathbb{X}\big)$ such that the application $x\mapsto \big(v^{\beta}-\varphi\big)\big(x\big)$ has a local maximum at the point $x_0\in\partial \mathbb{X}$ with $v^{\beta}\big(x_0\big)=\varphi\big(x_0\big)$ and
	\begin{equation*}
		F^{\beta}\Big(x_0,\varphi\big(x_0\big),D_x\varphi\big(x_0\big), D^2_x\varphi\big(x_0\big)\Big)\wedge\Gamma\Big(x_0,D_x\varphi\big(x_0\big)\Big)>0.
	\end{equation*} 
	By the continuity of $F^{\beta}$, $\Gamma$, $D^2_x\varphi$, $D_x\varphi$ and $\varphi$, there exist $\epsilon>0$ and $\delta>0$ such that 
	\begin{equation*}
		F^{\beta}\Big(x,\varphi\big(x\big),D_x\varphi\big(x\big), D^2_x\varphi\big(x\big)\Big)\geqslant\epsilon>0\quad\text{and}\quad\Gamma\Big(x, D_x\varphi\big(x\big)\Big)\geqslant\epsilon>0\quad \forall x\in \overline{B\big(x_0,\delta\big)}\cap \mathbb{X}.
	\end{equation*}
	By the definition of $F^{\beta}$, the first inequality implies that
	\begin{equation*}
		\beta\varphi\big(x\big)+\sup_{u\in \mathbb{U}}\bigg\{-\mathcal{L}^{u}_{X}\varphi\big(x\big)-L\big(x,u\big)\bigg\}\geqslant\epsilon>0\quad\forall x\in \overline{B\big(x_0,\delta\big)}\cap \mathbb{X}
	\end{equation*}
	where $\mathcal{L}^{u}_{X}$ is the differential operator \eqref{eq:auxiliary_equation_08}. Since the set $\mathbb{U}$ is compact and the fields $\mu_{X}$ and $\sigma_{X}$ are continuous, there exists $\widetilde{u}^{\epsilon}:\overline{B\big(x_0,\delta\big)}\cap \mathbb{X}\to \mathbb{U}$ such that 
	\begin{equation*}
		\beta\varphi\big(x\big)-\mathcal{L}^{\widetilde{u}^{\epsilon}}_{X}\varphi\big(x\big)-L\big(x,\widetilde{u}^{\epsilon}\big(x\big)\big)\geqslant\epsilon>0\quad\forall x\in \overline{B\big(x_0,\delta\big)}\cap \mathbb{X}.
	\end{equation*}
	From hypothesis $\textbf{H}_4$ it follows by Lemma \ref{lm:Lipschitz_controls} that $\widetilde{u}^{\epsilon}:\overline{B\big(x_0,\delta\big)}\cap \mathbb{X}\to \mathbb{U}$ is Lipschitz continuous. By \cite[Theorem 3.1]{EvansGariepy}, there exists a Lipschitz continuous function \( u^{\epsilon} : \mathbb{X} \to \mathbb{U} \) with the following property:
	\begin{equation*}
		u^{\epsilon}\big(x\big) = \widetilde{u}^{\epsilon}\big(x\big) \quad \forall x \in \overline{B\big(x_0, \delta\big)} \cap \mathbb{X}.
	\end{equation*}
	
	From hypotheses $\textbf{H}_1$ and $\textbf{H}_4$, it follows that there exists a single pair of processes $\big(X_{x_0}^{u^{\epsilon}}(t),l_{x_0}^{u^{\epsilon}}(t)\big)_{t\geqslant0}$ which is a strong solution of the SDE \eqref{eq:sde_reflection_control} when we use the control $u^{\epsilon}$ and the initial condition $x_0\in \partial \mathbb{X}$. From the equation \eqref{eq:dynamic_programming_principle_inf_inf} of the DPP (Theorem \ref{tm:dynamic_programming_principle}), it follows that 
	\begin{equation}
		\label{eq:auxiliary_inequality_13}
		v^{\beta}\big(x_0\big)\leqslant\inf_{\tau\in \mathcal{T}}\mathbb{E}\Bigg[\int_{0}^{\tau}e^{-\beta s}L\big(X^{u^{\epsilon}}_{x_0}(s), u^{\epsilon}(s)\big)\,ds +\int_{0}^{\tau}e^{-\beta s}h\big(X^{u^{\epsilon}}_{x_0}(s)\big)\,dl^{u^{\epsilon}}_{x_0}(s)+e^{-\beta\tau}v^{\beta}\big(X^{u^{\epsilon}}_{x_0}(\tau)\big)
		\Bigg].
	\end{equation}
	Now consider the stopping time $\tau^{\epsilon}:\Omega\to\mathbb{R}_{+}$ defined by 
	\begin{equation}
		\label{eq:stopping_time_tau_epsilon}
		\tau^{\epsilon}:=\inf\Bigg\{r: r\geqslant 0\hspace{0.10cm}\text{and}\hspace{0.10cm} X^{u^{\epsilon}}_{x_0}(r)\notin\overline{B\big(x_0,\delta\big)}\cap \mathbb{X}\Bigg\}.
	\end{equation}
	We claim that $\tau^{\epsilon}>0$ $\mathbb{P}$-a.s. Indeed, since $\big(X^{u^{\epsilon}}_{x_0}(t)\big)_{t\geqslant0}$ is a continuous process $\mathbb{P}$-a.s, therefore, if $X^{u^{\epsilon}}_{x_0}(r,\omega)\notin \overline{B\big(x_0,\delta\big)}\cap\mathbb{X}$ then there exists $\gamma^{\epsilon}(\omega)>0$ such that $r>\gamma^{\epsilon}(\omega)$. This implies
	\begin{equation*}
		\inf\Bigg\{r: r\geqslant 0\hspace{0.10cm}\text{and}\hspace{0.10cm}X^{u^{\epsilon}}_{x_0}(r,\omega)\notin\overline{B\big(x_0,\delta\big)}\cap\mathbb{X}\Bigg\}\geqslant\gamma^{\epsilon}(\omega)>0.
	\end{equation*}
	Therefore, by the definition of $\tau^{\epsilon}$, it follows that $\tau^{\epsilon}>0$ $\mathbb{P}$-a.s. On the other hand, by Itô's formula applied to the function $(r,x)\mapsto e^{-\beta r}\varphi\big(x\big)$, we obtain
	\begin{align*}
		e^{-\beta \tau^{\epsilon}}\varphi\big(X^{\widetilde{u}^{\epsilon}}_{x_0}(\tau^{\epsilon})\big)=\varphi\big(x_0\big)-&\int_{0}^{\tau^{\epsilon}}e^{-\beta s}\beta\varphi\big(X^{\widetilde{u}^{\epsilon}}_{x_0}(s)\big)\,ds\\
		+&\int_{0}^{\tau^{\epsilon}}e^{-\beta s}\bigg\langle D_{x}\varphi\big(X^{\widetilde{u}^{\epsilon}}_{x_0}(s)\big),\mu_X\big(X^{\widetilde{u}^{\epsilon}}_{x_0}(s),\widetilde{u}^{\epsilon}(s)\big)\bigg\rangle \,ds\\
		-&\int_{0}^{\tau^{\epsilon}}e^{-\beta s}\bigg\langle D_{x}\varphi\big(X^{\widetilde{u}^{\epsilon}}_{x_0}(s)\big), D_x\phi\big(X^{\widetilde{u}^{\epsilon}}_{x_0}(s)\big)\bigg\rangle\, dl^{\widetilde{u}^{\epsilon}}_{x_0}(s)\\
		+&\int_{0}^{\tau^{\epsilon}}e^{-\beta s}\frac{1}{2}\text{Tr}\Big(D^2_{x}\varphi\big(X^{\widetilde{u}^{\epsilon}}_{x_0}(s)\big)\big[\sigma_{X}\sigma^{\top}_{X}\big]\big(X^{\widetilde{u}^{\epsilon}}_{x_0}(s), \widetilde{u}^{\epsilon}(s)\big)\Big)\,ds\\
		+&\int_{0}^{\tau^{\epsilon}}\sum_{i=1}^{d}\sum_{l=1}^{d_W}e^{-\beta s}\partial_{x_i}\varphi\big(X^{\widetilde{u}^{\epsilon}}_{x_0}(s)\big)\sigma_{X, il}\big(X^{\widetilde{u}^{\epsilon}}_{x_0}(s),\widetilde{u}^{\epsilon}(s)\big)\,dW^{(l)}(s).
	\end{align*}
	Since the last term is a martingale, applying the expectation to both sides of the above equality follows that
	\begin{align*}
		\mathbb{E}\Big[e^{-\beta \tau^{\epsilon}}\varphi\big(X^{\widetilde{u}^{\epsilon}}_{x_0}(\tau^{\epsilon})\big)\Big]=\varphi\big(x_0\big)+\mathbb{E}\Bigg[- &\int_{0}^{\tau^{\epsilon}}e^{-\beta s}\beta\varphi\big(X^{\widetilde{u}^{\epsilon}}_{x_0}(s)\big)\,ds\\
		+&\int_{0}^{\tau^{\epsilon}}e^{-\beta s}\bigg\langle D_{x}\varphi\big(X^{\widetilde{u}^{\epsilon}}_{x_0}(s)\big),\mu_X\big(X^{\widetilde{u}^{\epsilon}}_{x_0}(s),\widetilde{u}^{\epsilon}(s)\big)\bigg\rangle \,ds\\
		-&\int_{0}^{\tau^{\epsilon}}e^{-\beta s}\bigg\langle D_{x}\varphi\big(X^{\widetilde{u}^{\epsilon}}_{x_0}(s)\big), D_x\phi\big(X^{\widetilde{u}^{\epsilon}}_{x_0}(s)\big)\bigg\rangle\, dl^{\widetilde{u}^{\epsilon}}_{x_0}(s)\\
		+&\int_{0}^{\tau^{\epsilon}}e^{-\beta s}\frac{1}{2}\text{Tr}\Big(D^2_{x}\varphi\big(X^{\widetilde{u}^{\epsilon}}_{x_0}(s)\big)\big[\sigma_{X}\sigma^{\top}_{X}\big]\big(X^{\widetilde{u}^{\epsilon}}_{x_0}(s), \widetilde{u}^{\epsilon}(s)\big)\Big)\,ds\Bigg].
	\end{align*}
	By definition $\mathcal{L}^{u}_{X}$, we can rewrite the above equality as
	\begin{align*}
		\varphi\big(x_0\big)=\mathbb{E}\Bigg[e^{-\beta \tau^{\epsilon}}\varphi\big(X^{\widetilde{u}^{\epsilon}}_{x_0}(\tau^{\epsilon})\big)-&\int_{0}^{\tau^{\epsilon}}e^{-\beta s}\big(-\beta\varphi+\mathcal{L}^{\widetilde{u}^{\epsilon}}_{X}\varphi\big)\big(X^{\widetilde{u}^{\epsilon}}_{x_0}(s),\widetilde{u}^{\epsilon}(s)\big)\,ds+\int_{0}^{\tau^{\epsilon}}e^{-\beta s}\bigg\langle D_{x}\varphi\big(X^{\widetilde{u}^{\epsilon}}_{x_0}(s)\big), D_x\phi\big(X^{\widetilde{u}^{\epsilon}}_{x_0}(s)\big)\bigg\rangle\, d l^{\widetilde{u}^{\epsilon}}_{x_0}(s)\Bigg].
	\end{align*}
	Denoting $\vartheta:=\mathbb{E}\big[1-e^{-\beta \tau^{\epsilon}}\big]>0$ and recalling that $v^{\beta}\big(x_0\big)=\varphi\big(x_0\big)$, it follows from the inequality \eqref{eq:auxiliary_inequality_13} that
	\begin{align*}
		0=\varphi\big(x_0\big)-v^{\beta}\big(x_0\big)\geqslant\mathbb{E}\Bigg[e^{-\beta\tau^{\epsilon}}\bigg[\varphi\big(X^{\widetilde{u}^{\epsilon}}_{x_0}(\tau^{\delta})\big)-v^{\beta}\big(X^{\widetilde{u}^{\epsilon}}_{x_0}(\tau^{\epsilon})\big)\bigg]&+\int_{0}^{\tau^{\epsilon}}e^{-\beta s}\big(\beta\varphi-\mathcal{L}^{\widetilde{u}^{\epsilon}}_{X}\varphi-L\big)\big(X^{\widetilde{u}^{\epsilon}}_{x_0}(s),\widetilde{u}^{\epsilon}(s)\big)\,ds\\
		&+\int_{0}^{\tau^{\epsilon}}e^{-\beta s}\big(\big\langle D_{x}\varphi,D_x\phi\big\rangle-h\big)\big(X^{\widetilde{u}^{\epsilon}}_{x_0}(s)\big)\,dl^{\widetilde{u}^{\epsilon}}_{x_0}(s)\Bigg].
	\end{align*}
	Now, given that 
	\begin{equation*}
		\big(\beta\varphi-\mathcal{L}^{\widetilde{u}^{\epsilon}}_{X}\varphi-L\big)\big(X^{\widetilde{u}^{\epsilon}}_{x_0}(s),\widetilde{u}^{\epsilon}(s)\big)\geqslant\epsilon\quad\text{and}\quad \big(\big\langle D_{x}\varphi,D_x\phi\big\rangle-h\big)\big(X^{\widetilde{u}^{\epsilon}}_{x_0}(s)\big)\geqslant \epsilon.
	\end{equation*}
	For all $s\in [0,\tau^{\epsilon}]$ it follows that
	\begin{align*}
		0=\varphi\big(x_0\big)-v^{\beta}\big(x_0\big)&\geqslant
		\mathbb{E}\Bigg[e^{-\beta\tau^{\epsilon}}\bigg[\varphi\big(X^{\widetilde{u}^{\epsilon}}_{x_0}(\tau^{\epsilon})\big)-v^{\beta}\big(X^{\widetilde{u}^{\epsilon}}_{x_0}(\tau^{\epsilon})\big)\bigg]+\epsilon\int_{0}^{\tau^{\epsilon}}e^{-\beta s}\,ds+\epsilon\int_{0}^{\tau^{\epsilon}}e^{-\beta s}\, dl^{\widetilde{u}^{\epsilon}}_{x_0}(s)\Bigg]\\
		&\geqslant\epsilon\mathbb{E}\Bigg[\int_{0}^{\tau^{\epsilon}}e^{-\beta s}ds+\int_{0}^{\tau^{\delta}}e^{-\beta s}\, d l^{\widetilde{u}^{\epsilon}}_{x_0}(s)\Bigg]\\
		&\geqslant\epsilon\mathbb{E}\Bigg[\int_{0}^{\tau^{\epsilon}}e^{-\beta s}\,ds\Bigg]=\epsilon\mathbb{E}\Bigg[\frac{1-e^{-\beta \tau^{\epsilon}}}{\beta}\Bigg]\\
		&=\frac{\epsilon}{\beta}\mathbb{E}\big[1-e^{-\beta \tau^{\epsilon}}\big]\geqslant\frac{\epsilon\vartheta}{\beta}>0.
	\end{align*}
	In other words,
	\begin{equation*}
		0=\varphi\big(x_0\big)-v^{\beta}\big(x_0\big)>0.
	\end{equation*}
	A contradiction, and therefore,
	\begin{equation*}
		\Gamma\Big(x_0,D_{x}\varphi\big(x_0\big)\Big)\wedge F^{\beta}\Big(x_0,\varphi\big(x_0\big),D_{x}\varphi\big(x_0\big),D^2_{x}\varphi\big(x_0\big)\Big) \leqslant0.
	\end{equation*}
	Therefore, the optimal value function \eqref{eq:optimal_value_function} is a viscosity subsolution of the PDE \eqref{eq:pde_hamilton_jacobi_bellman}-\eqref{eq:von_Neumann_boundary_condition}.
\end{proof}
\begin{proof}[Proof: Viscosity Supersolution]
	The argument is by contradiction. Assume that there exists $\varphi\in \text{C}^{\infty}\big(\mathbb{X}\big)$ such that the application $x\mapsto \big(v^{\beta}-\varphi\big)\big(x\big)$ has a local minimum at the point $x_0\in\partial \mathbb{X}$ with $v^{\beta}\big(x_0\big)=\varphi\big(x_0\big)$ and
	\begin{equation*}
		F^{\beta}\Big(x_0,\varphi\big(x_0\big),D_x\varphi\big(x_0\big), D^2_x\varphi\big(x_0\big)\Big)\wedge\Gamma\Big(x_0,D_x\varphi\big(x_0\big)\Big)<0.
	\end{equation*} 
	By the continuity of $F^{\beta}$, $\Gamma$, $D^2_x\varphi$, $D_x\varphi$ and $\varphi$, there exists $\epsilon>0$ and $\delta>0$ such that 
	\begin{equation*}
		F^{\beta}\Big(x,\varphi\big(x\big),D_{x}\varphi\big(x\big),D^2_{x}\varphi\big(x\big)\Big)\leqslant-\epsilon<0\quad\text{and}\quad\Gamma\Big(x,D_{x}\varphi\big(x\big)\Big)\leqslant-\epsilon<0\quad \forall x\in \overline{B\big(x_0,\delta\big)}\cap\mathbb{X}.
	\end{equation*}
	By the definition of $F^{\beta}$, the first inequality above means that
	\begin{equation*}
		\beta \varphi\big(x\big)-\inf_{u\in \mathbb{U}}\bigg\{\mathcal{L}^{u}_X\varphi\big(x\big)+L\big(x,u\big)\bigg\}\leqslant -\epsilon<0.
	\end{equation*}
	This implies, for every $u\in \mathbb{U}$, that
	\begin{equation*}
		\beta \varphi\big(x\big)-\mathcal{L}^{u}_X\varphi\big(x\big)-L\big(x,u\big)\leqslant -\epsilon<0\quad \forall x\in \overline{B\big(x_0,\delta\big)}\cap\mathbb{X}.
	\end{equation*}
	On the other hand, consider the equation \eqref{eq:dynamic_programming_principle_inf_sup} of the DPP (Theorem \ref{tm:dynamic_programming_principle}). Given $\rho>0$, there is a control $u^{\rho}\in\mathcal{U}$ such that 
	\begin{equation}
		\label{eq:auxiliary_inequality_dynamic_programming}
		v^{\beta}\big(x_0\big)+\frac{\epsilon\rho}{2}\geqslant\sup_{\tau\in \mathcal{T}}\mathbb{E}\Bigg[\int_{0}^{\tau}e^{-\beta s}L\big(X^{u^{\rho}}_{x_0}(s),u^{\rho}(s)\big)\,ds+\int_{0}^{\tau}e^{-\beta s}h\big(X^{u^{\rho}}_{x_0}(s)\big)\,dl^{u^{\rho}}_{x_0}(s)
		+e^{-\beta \tau}v^{\beta}\big(X^{u^{\rho}}_{x_0}(\tau)\big)\Bigg]
	\end{equation}
	with $\big(X^{u^{\rho}}_{x_0}(t),l^{u^{\rho}}_{x_0}(t)\big)_{t\geq 0}$ the unique strong solution of the SDE \eqref{eq:sde_reflection_control} associated with the control $u^{\rho}$ and the initial condition $x_0$.
	From the inequality \eqref{eq:auxiliary_inequality_dynamic_programming}, we have for all $\tau\in\mathcal{T}$, that
	\begin{equation}
		\label{eq:auxiliary_inequality_dynamic_programming_01}
		v^{\beta}\big(x_0\big)+\frac{\epsilon\rho}{2}\geqslant\mathbb{E}\Bigg[\int_{0}^{\tau}e^{-\beta s}L\big(X^{u^{\rho}}_{x_0}(s),u^{\rho}(s)\big)\,ds +\int_{0}^{\tau}e^{-\beta s}h\big(X^{u^{\rho}}_{x_0}(s)\big)\,dl^{u^{\rho}}_{x_0}(s)+e^{-\beta \tau}v^{\beta}\big(X^{u^{\rho}}_{x_0}(\tau)\big)\Bigg].
	\end{equation}
	On the other hand, consider the family of stopping times $\big(\theta^{\rho}\big)_{\rho>0}$ defined by 
	\begin{equation}
		\label{eq:supersolution_stop_time}
		\theta^{\rho}:=\rho\wedge\tau^{\rho}
	\end{equation}
	where the stopping time $\tau^{\rho}$ is defined in the analogous way to the stopping time  $\eqref{eq:stopping_time_tau_epsilon}$. We state that: 
	\begin{equation}
		\label{eq:auxiliary_equation_09}
		\lim_{\rho\to0}\frac{1}{\rho}\mathbb{E}\big[ \rho\wedge\tau^{\rho}\big]=1.
	\end{equation}
	In fact, according to Tchebyshev's inequality, we have that
	\begin{equation*}
		\mathbb{P}\big(\tau^{\rho}\leqslant \rho\big)\leqslant \mathbb{P}\Big(\sup_{r\in[0,\rho]}\|X^{u^{\rho}}_{x_0}(r)-x_0\|> \delta\Big)\leqslant\frac{1}{\delta^2}\mathbb{E}\Bigg[\sup_{r\in[0,\rho]}\|X^{u^{\rho}}_{x_0}(r)-x_0\|^2\Bigg].
	\end{equation*}
	By inequality 1.18 in \cite[Chapter 1]{Pham}
	\begin{equation*}
		\mathbb{E}\Bigg[\sup_{r\in[0,\rho]}\|X^{u^{\rho}}_{x_0}(r)-x_0\|^2\Bigg]\leqslant C\big(1+\|x_0\|^2\big)\rho e^{C\rho}
	\end{equation*}
	for strong solutions of SDEs, where $C>0$ is a constant (depending on the Lipschitz constant) and combining this inequality with the previous one, we get
	\begin{equation*}
		\mathbb{P}\big(\tau^{\rho}\leqslant \rho\big)\leqslant \frac{C}{\delta^2}\big(1+\|x_0\|^2\big)\rho e^{C\rho}.
	\end{equation*}
	Making $\rho\to 0$, it follows that 
	\begin{equation*}
		\lim_{\rho\to0}\mathbb{P}\big(\tau^{\rho}\leqslant \rho\big)=0.
	\end{equation*}
	On the other hand, by the Markov inequality, we get
	\begin{equation*}
		\mathbb{P}\big(\tau^{\rho}>\rho\big)=\mathbb{P}\big(\rho\wedge\tau^{\rho}>\rho\big)\leqslant \frac{1}{\rho}\mathbb{E}\big[ \rho\wedge\tau^{\rho}\big]\leqslant\frac{\rho}{\rho}=1.
	\end{equation*}
	Making $\rho\to 0$, we get \eqref{eq:auxiliary_equation_09}.
	
	Returning to the main result, we combine the stopping time \eqref{eq:supersolution_stop_time} with the inequality \eqref{eq:auxiliary_inequality_dynamic_programming_01}, to obtain
	\begin{equation*}
		v^{\beta}\big(x_0\big)+\frac{\epsilon\rho}{2}\geqslant\mathbb{E}\Bigg[\int_{0}^{\theta^{\rho}}e^{-\beta s}L\big(X^{u^{\rho}}_{x_0}(s),u^{\rho}(s)\big)\,ds+\int_{0}^{\theta^{\rho}}e^{-\beta s}h\big(X^{u^{\rho}}_{x_0}(s)\big)\,dl^{u^{\rho}}_{x_0}(s) +e^{-\beta \theta^{\rho} }v^{\beta}\big(X^{u^{\rho}}_{x_0}(\theta^{\rho})\big)\Bigg].
	\end{equation*}
	From Itô's formula applied to the function $(r,x)\mapsto e^{-\beta r}\varphi\big(x\big)$, it follows by arguments analogous to those used in the proof of viscosity subsolution, that 
	\begin{equation*}
		\varphi\big(x_0\big)=\mathbb{E}\Bigg[e^{-\beta \theta^{\rho}}\varphi\big(X^{u^{\rho}}_{x_0}(\theta^{\rho})\big)-\int_{0}^{\theta^{\rho}}e^{-\beta s}\big(-\beta\varphi+\mathcal{L}^{u^{\rho}}_{X}\varphi\big)\big(X^{u^{\rho}}_{x_0}(s)\big)\,ds+\int_{0}^{\theta^{\rho}}e^{-\beta s}\bigg\langle D_{x}\varphi\big(X^{u^{\rho}}_{x_0}(s)\big),D_x\phi\big(X^{u^{\rho}}_{x_0}(s)\big)\bigg\rangle \,dl^{u^{\rho}}_{x_0}(s)\Bigg].
	\end{equation*}
	Recalling that $v^{\beta}(x_0)=\varphi(x_0)$, we get
	\begin{align*}
		0=\varphi\big(x_0\big)-v^{\beta}\big(x_0\big)\leqslant\mathbb{E}\Bigg[e^{-\beta \theta^{\rho}}\bigg[\varphi\big(X^{u^{\rho}}_{x_0}(\theta^{\rho})\big)-v^{\beta}\big(X^{u^{\rho}}_{x_0}(\theta^{\rho})\big)\bigg]+&\int_{0}^{\theta^{\rho}}e^{-\beta s}\big(\beta\varphi-\mathcal{L}^{u^{\rho}}_{X}\varphi-L\big)\big(X^{u^{\rho}}_{x_0}(s),u^{\rho}(s)\big)\,ds\\
		+&\int_{0}^{\theta^{\rho}}e^{-\beta s}\big(\big\langle D_{x}\varphi,D_x\phi\big\rangle-h\big)\big(X^{u^{\rho}}_{x_0}(s)\big)\, dl^{u^{\rho}}_{x_0}(s)\Bigg]+\frac{\epsilon\rho}{2}.
	\end{align*}
	Now, given that
	\begin{equation*}
		\big(\beta\varphi-\mathcal{L}^{u^{\rho}}_{X}\varphi-L\big)\big(X^{u^{\rho}}_{x_0}(s),u^{\rho}(s)\big)\leqslant-\epsilon\quad\text{and}\quad \big(\big\langle D_{x}\varphi,D_x\phi \big\rangle-h\big)\big(X^{u^{\rho}}_{x_0}(s)\big)\leqslant -\epsilon
	\end{equation*}
	for all $s\in [0,\theta^{\rho}]$, we have that
	\begin{align*}
		0=\varphi\big(x_0\big)-v^{\beta}\big(x_0\big)\leqslant&
		\mathbb{E}\Bigg[e^{-\beta \theta^{\rho}}\bigg[\varphi\big(X^{u^{\rho}}_{x_0}(\theta^{\rho})\big)-v^{\beta}\big(X^{u^{\rho}}_{x_0}(\theta^{\rho})\big)\bigg]-\epsilon\int_{0}^{\theta^{\rho}}e^{-\beta s}\,ds-\epsilon\int_{0}^{\theta^{\rho}}e^{-\beta s}\, dl^{u^{\rho}}_{x_0}(s)\Bigg]+\frac{\epsilon\rho}{2}\\
		\leqslant&-\epsilon\mathbb{E}\Bigg[\int_{0}^{\theta^{\rho}}e^{-\beta s}\,ds+\int_{0}^{\theta^{\rho}}e^{-\beta s}\, dl^{u^{\rho}}_{x_0}(s)\Bigg]+\frac{\epsilon\rho}{2}\leqslant-\epsilon\mathbb{E}\Bigg[\int_{0}^{\theta^{\rho}}e^{-\beta s}\,ds\Bigg]+\frac{\epsilon\rho}{2}\\
		\leqslant&-\epsilon e^{-\beta \rho}\mathbb{E}\big[\theta^{\rho}\big]+\frac{\epsilon\rho}{2}=-\epsilon e^{-\beta \rho}\mathbb{E}\big[\rho\wedge\tau^{\rho}\big]+\frac{\epsilon\rho}{2}.
	\end{align*}
	In other words,
	\begin{equation*}
		0\leqslant -\epsilon e^{-\beta \rho}\mathbb{E}\big[\rho\wedge\tau^{\rho}\big]+\frac{\epsilon\rho}{2}.
	\end{equation*}
	Dividing the above inequality by $\rho>0$, we get
	\begin{equation*}
		0\leqslant -\epsilon \frac{e^{-\beta \rho}}{\rho}\mathbb{E}\big[\rho\wedge\tau^{\rho}\big]+\frac{\epsilon}{2}.
	\end{equation*}
	Making $\rho\to 0$, it follows that 
	\begin{equation*}
		0\leqslant -\epsilon+\frac{\epsilon}{2}=-\frac{\epsilon}{2}<0.
	\end{equation*}
	A contradiction and so we get
	\begin{equation*}
		\Gamma\Big(x_0,D_{x}\varphi\big(x_0\big)\Big)\lor F^{\beta}\Big(x_0,\varphi\big(x_0\big),D_{x}\varphi\big(x_0\big),D^2_{x}\varphi\big(x_0\big)\Big) \geqslant0.
	\end{equation*}
	Therefore, the optimal value function \eqref{eq:optimal_value_function} is a viscosity supersolution of the PDE \eqref{eq:pde_hamilton_jacobi_bellman}.
\end{proof}

 The result stated below was originally established in \cite{Crandall}, using a definition of viscosity solution based on the notions of superjets and subjets, which differs from the definition adopted here (see Definition \ref{df:viscosity_solutions}). Nevertheless, it is known that these formulations are equivalent; see, for instance, \cite[Chapter 5]{Soner} or \cite[Appendix D]{Pardoux} for a detailed discussion.

\begin{theorem}[Comparison theorem for problems with Von
	Neumann boundary condition \protect{\cite[Section 7]{Crandall}}]
	\label{tm:unicity_PVC_Von_Neumann}
	Assume that the structural conditions described in \cite[Section 7]{Crandall} are satisfied, and that the problem \eqref{subeq:pde_hjb_and_von_Neumann_boundary_condition} admits a viscosity subsolution $w$ and a viscosity supersolution $v$. Then, $w\big(x\big)\leqslant v\big(x\big)$ for all $x\in\mathbb{X}$.
\end{theorem}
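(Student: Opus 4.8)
The plan is to prove the comparison by the doubling-of-variables method combined with the Crandall--Ishii lemma on sums, specialized to the oblique (Neumann) boundary condition exactly as in \cite[Section 7]{Crandall}; since the statement is quoted from that reference, the substance of the argument is to check that our data fit its structural hypotheses and to recall why the mechanism closes. I would argue by contradiction, assuming $M:=\max_{x\in\mathbb{X}}\big(w(x)-v(x)\big)>0$, which is attained because $\mathbb{X}$ is compact and $w-v$ is upper semicontinuous. For $\alpha>0$ I introduce the penalized functional
\[
\Phi_{\alpha}(x,y):=w(x)-v(y)-\frac{\alpha}{2}\|x-y\|^2,
\]
maximized over $\mathbb{X}\times\mathbb{X}$ at some $(x_\alpha,y_\alpha)$. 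The classical penalization lemma then gives $\alpha\|x_\alpha-y_\alpha\|^2\to0$ and $\Phi_\alpha(x_\alpha,y_\alpha)\to M$, with every accumulation point of $(x_\alpha)$ a maximizer of $w-v$.

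The core step is to invoke the Crandall--Ishii lemma to obtain symmetric matrices $X_\alpha,Y_\alpha\in\mathbb{S}^d(\mathbb{R})$ with $X_\alpha\leqslant Y_\alpha$ such that $\big(\alpha(x_\alpha-y_\alpha),X_\alpha\big)$ belongs to the closure of the second-order superjet of $w$ at $x_\alpha$ and $\big(\alpha(x_\alpha-y_\alpha),Y_\alpha\big)$ to that of the subjet of $v$ at $y_\alpha$. For interior maximizers I feed these into the subsolution inequality for $w$ and the supersolution inequality for $v$, subtract, and bound the Hamiltonian difference $\mathcal{H}(x_\alpha,p_\alpha,X_\alpha)-\mathcal{H}(y_\alpha,p_\alpha,Y_\alpha)$ from above by $\sup_{u}$ of the termwise differences: the drift term is $\leqslant C\alpha\|x_\alpha-y_\alpha\|^2$ and the cost term $\leqslant C\|x_\alpha-y_\alpha\|$ by $\textbf{H}_1$ and $\textbf{H}_2$, while the matrix inequality together with $\sigma_X\sigma_X^{\top}\succeq0$ controls the trace term by $3\alpha\|\sigma_X(x_\alpha,u)-\sigma_X(y_\alpha,u)\|^2\leqslant 3C^2\alpha\|x_\alpha-y_\alpha\|^2$. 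All of these vanish as $\alpha\to\infty$, whereas properness ($\beta>0$) forces $\beta\big(w(x_\alpha)-v(y_\alpha)\big)\geqslant\beta\,\Phi_\alpha(x_\alpha,y_\alpha)\to\beta M>0$; this contradicts $\beta M>0$.

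The main obstacle is the boundary: when $x_\alpha$ (resp. $y_\alpha$) lands on $\partial\mathbb{X}$, the subsolution condition only asserts $F^{\beta}\wedge\Gamma\leqslant0$ (resp. the supersolution $F^{\beta}\vee\Gamma\geqslant0$), so one must prevent the boundary operator $\Gamma$ from being the active alternative. The enabling structural fact is that $D_x\phi$ is the unit outward normal on $\partial\mathbb{X}$, so $\Gamma(x,g)=\langle g,D_x\phi(x)\rangle-h(x)$ is strictly oblique ($\langle D_x\phi,D_x\phi\rangle=1>0$). Following \cite[Section 7]{Crandall}, I would modify the doubling by a boundary-correction term built from $\phi$ (licit since $\phi\in\text{C}^2_{\text{b}}(\mathbb{R}^d)$) so as to bias the co-normal component of the tested gradient, rendering the boundary inequality strict at a boundary extremum; this excludes the $\Gamma$-alternative and forces the $F^{\beta}$-inequality to hold, after which the estimate of the previous paragraph applies. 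This localization at the boundary is the delicate part of the argument and is precisely where the structural conditions of \cite[Section 7]{Crandall} are used.

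It then remains to verify the structural hypotheses of \cite[Section 7]{Crandall} for the problem \eqref{subeq:pde_hjb_and_von_Neumann_boundary_condition}: degenerate ellipticity of $F^{\beta}$ is immediate from $\sigma_X\sigma_X^{\top}\succeq0$ (so $\mathcal{H}$ is nondecreasing in the Hessian and $F^{\beta}$ nonincreasing); properness follows from $\beta>0$; the continuity/structure condition on $\mathcal{H}$ is exactly the modulus estimate above, furnished by the uniform Lipschitz bounds $\textbf{H}_1$ and $\textbf{H}_2$; and $\Gamma$ is a smooth, strictly oblique boundary operator with $h$ continuous by $\textbf{H}_3$. With these verified, the cited comparison theorem yields $w(x)\leqslant v(x)$ for all $x\in\mathbb{X}$.
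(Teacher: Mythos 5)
The paper does not actually prove this statement: it is imported wholesale from \cite[Section 7]{Crandall}, with the paper only remarking that the jet-based definition used there is equivalent to its test-function Definition \ref{df:viscosity_solutions}. Your proposal reconstructs precisely the argument of that cited source --- doubling of variables, the Crandall--Ishii lemma on sums, the structure estimate on $\mathcal{H}$ furnished by $\textbf{H}_1$--$\textbf{H}_2$, properness from $\beta>0$, and exclusion of the $\Gamma$-alternative at boundary extrema via obliqueness of the co-normal --- so it is the same approach, carried out in more detail than the paper itself offers. The one caveat is that the boundary-localization step (the explicit correction of the penalization built from $\phi$ that makes $\Gamma$ strictly positive for the subsolution test and strictly negative for the supersolution test at a boundary maximum) is described rather than executed; that construction is where essentially all of the technical work of \cite[Section 7]{Crandall} resides, so your sketch, while faithful, still ultimately leans on the reference for the same step the paper does.
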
 

Since Theorem \ref{tm:existence_viscosity_solution_pde_hjb} establishes that the optimal value function \eqref{eq:optimal_value_function} is a viscosity solution of the boundary value problem \eqref{subeq:pde_hjb_and_von_Neumann_boundary_condition}, Theorem \ref{tm:unicity_PVC_Von_Neumann} ensures that this solution is unique.
	
\subsection{Examples}
\label{sec:examples}
In this section we present two examples that implement what we have developed in the previous sections. In the first, we have a control problem for which the HBJ equation is semilinear, so we can explicitly calculate the optimal control as a function of the gradient of the optimal value function. In the second, we have a control problem for which the HJB equation is totally nonlinear.

In both cases, we will consider the following space for the slow variable
\begin{equation}
	\label{eq:domain_D_slow_variable}
	\mathbb{X}:=\big[-\alpha,\alpha\big].
\end{equation}
for $\alpha>0$. Consider the following function $\phi:\mathbb{R}\to\mathbb{R}$ defined by
\begin{equation}
	\label{eq:example_phi_function}
	\phi\big(x\big):=\frac{1}{2\alpha}e^{\alpha^2-x^2}\big(x^2-\alpha^2\big).
\end{equation}
Let's show that the set $\mathbb{X}$ has the following representation 
\begin{equation}
	\label{eq:domain_D_slow_variable_phi}
	\mathbb{X}=\Bigg\{x\in\mathbb{R}:\phi\big(x\big)\leqslant 0\Bigg\}
\end{equation}
in terms of the function \eqref{eq:example_phi_function}. To do this, let's look at some properties of the function \eqref{eq:example_phi_function}. First of all, note that
\begin{equation*}
	\lim_{x\to+\infty}\phi\big(x\big)=0\quad\text{and}\quad \lim_{x\to-\infty}\phi\big(x\big)=0.
\end{equation*}
Note that \eqref{eq:example_phi_function} is clearly $\text{C}^2\big(\mathbb{R}\big)$ with derivative given by
\begin{equation}
	\label{eq:example_phi_function_derivative}
	\frac{d\phi}{dx}\big(x\big)=\frac{x}{\alpha}e^{\alpha^2-x^2}\big(1+\alpha^2-x^2\big).
\end{equation}
At points $-\alpha$ and $\alpha$ we have that
\begin{equation*}
	\frac{d\phi}{dx}\big(\alpha\big)=1\quad \text{and}\quad \frac{d\phi}{dx}\big(-\alpha\big)=-1.
\end{equation*}
The critical points of $\phi$ are
\begin{equation*}
	\frac{d\phi}{dx}\big(0\big)=0,\quad \frac{d\phi}{dx}\big(\sqrt{1+\alpha^2}\big)=0\quad\text{and}\quad \frac{d\phi}{dx}\big(-\sqrt{1+\alpha^2}\big)=0.
\end{equation*}
where $0$ is the minimum point and $\sqrt{1+\alpha^2}$ and $-\sqrt{1+\alpha^2}$ are the maximum points.  Then we have that \eqref{eq:example_phi_function} is a bounded function. Furthermore, we have
\begin{equation*}
	\phi\big(x\big)<0\quad \forall x\in\big(-\alpha,\alpha\big),\quad \phi\big(-\alpha\big)=0\quad\text{and} \quad \phi\big(\alpha\big)=0.
\end{equation*}
We conclude that the set \eqref{eq:domain_D_slow_variable}  admits the representation \eqref{eq:domain_D_slow_variable_phi}.

The above development is generalized to the $d_X$-dimensional case when we consider $\mathbb{X}:=\text{B}\big(0,\alpha\big)$ with $\alpha>0$ and $\phi:\mathbb{R}^{d_X}\to\mathbb{R}$ given by 
\begin{equation*}
	\phi\big(x\big):=\frac{1}{2\alpha}e^{\alpha^2-\|x\|^2}\big(\|x\|^2-\alpha^2\big).
\end{equation*}
\subsubsection{Control Problem with Semilinear HJB Equation}
\label{subsec:control_problem_semilinear_hjb_equation}
 The stochastic dynamical system we will be working with is defined by:
	\begin{equation}
		\label{eq:dynamic_system_exemplo_01}
		dX_x(t)=\bigg[\theta_a X_x(t)-\theta_b u(t)\bigg]\,dt+\sigma_{X}X_x(t)\,dW(t)-D_x\phi\big(X_x(t)\big)\,dl_{x}(t),
	\end{equation}
where $\theta_a,\theta_b\in\mathbb{R}$ are parameters, as well as $\sigma_{X}>0$. The control $\big(u^{\epsilon}(t)\big)_{t\geqslant0}$ has as its state space the interval $\big[u_a,u_b\big]$ with $u_a,u_b\in\mathbb{R}$ and $u_a<u_b$. Furthermore, we consider $\mathcal{U}$ to be the set of progressively measurable processes with state space given by the interval $\big[u_a,u_b\big]$. Finally, the process $\big(l_{x}(t)\big)_{t\geqslant 0}$ is continuous, non-decreasing, with $l_{x}\big(0\big)=0$ and satisfies the condition
\begin{equation*}
	l_{x}(t)=\int_{0}^{t}\mathds{1}_{\partial \mathbb{X}}\big(X^{\epsilon}_x(t)\big)\,dl_{x}(s)\quad \mathbb{P}\text{-a.s}.
\end{equation*}
The drift and dispersion associated with the system \eqref{eq:dynamic_system_exemplo_01} are given by:
\begin{subequations}
	\begin{align}
		\label{eq:drift_example_01}
		\mu_{X}\big(x,u\big)&:=\theta_a x-\theta_b u,\\
		\label{eq:dispersion_example_01}
		\sigma_{X}\big(x,u\big)&:=\sigma_{X}x
	\end{align}
\end{subequations} 
Note that the fields \eqref{eq:drift_example_01} and \eqref{eq:dispersion_example_01} are bounded and Lipschitz continuous in the state variables $x$ in $\mathbb{X}\times\big[u_a,u_b\big]$ uniformly with respect to the control variable $u$. 

We now move on to the definitions of the operational and preventive costs. We consider the following function to be the operational cost
\begin{equation}
	\label{eq:operational_cost_example_01}
	L\big(x,u\big):=\big(\theta_d-u\big)^2
\end{equation}
with $\theta_d>0$ a parameter. This function is bounded and Lipschitz continuous in $\mathbb{X}\times\big[u_a,u_b\big]$ uniformly with respect to the control variable $u$. For the preventive cost we simply take
\begin{equation*}
	h\big(-\alpha\big):=\theta_e=:h\big(\alpha\big)
\end{equation*}
where $\theta_e\in\mathbb{R}$ is a parameter. 

Fixed $\beta>0$, we define the cost functional by
\begin{equation}
	\label{eq:operational_functional_example_01}
	J_{x}^{\beta}\big(u\big):=\mathbb{E}\Bigg[\int_{0}^{+\infty}e^{-\beta s}\big(\theta_d-u(s)\big)^2\,ds+\int_{0}^{+\infty}e^{-\beta s}\theta_e\,dl_{x}(s)\Bigg].
\end{equation}
Thus, the optimal value function is given by
\begin{equation}
	\label{eq:optimal_value_function_example_01}
	v^{\beta}\big(x\big):=\inf_{u\in \mathcal{U}}J_{x}^{\beta}\big(u\big)
\end{equation}
and by Theorem \ref{tm:continuity_optimal_value_function} in Subsection \S\ref{subsec:continuity_optimal_value_function}, the function \eqref{eq:optimal_value_function_example_01} is continuous. Using the principle of dynamic programming, proven in Subsection \S\ref{subsec:dynamic_programming_principle}, we obtain the following HJB equation 
\begin{subequations}
	\label{subeq:HJB_equation_boundary_condition_example_01}
	\begin{align}
		\label{eq:HJB_equation_exemplo_01}
		\beta v^{\beta}\big(x\big)-\mathcal{H}\Big(x,\partial_xv^{\beta}\big(x\big),\partial^2_{x^2}v^{\beta}\big(x\big)\Big)&=0\quad\forall x\in \mathbb{X},\\
		\label{eq:boundary_condition_example_01}
		\partial_{x}v^{\beta}\big(-\alpha\big)=-\theta_e\hspace{0.15cm}\text{and}\hspace{0.15cm} \partial_{x}v^{\beta}\big(\alpha\big)&=\theta_e.
	\end{align}
\end{subequations}
where the Hamiltonian is defined by
\begin{align*}
	\mathcal{H}\big(x,g_x,H_x\big):=&\frac{\sigma^2_{X}x^2}{2}H_x+\theta_axg_x+\min_{u\in [u_a,u_b]}\bigg\{u^2-\big(2\theta_d+\theta_bg_x\big)u\bigg\}+\theta_d^2.
\end{align*}
To obtain an explicit formula for the Hamiltonian \eqref{eq:HJB_equation_exemplo_01}, we must calculate the minimization 
\begin{equation*}
	u_{\eta}^{*}\big(g_x\big):=\arg\min_{u\in [u_a,u_b]}\bigg\{u^2-f_{\eta}\big(g_x\big)u\bigg\}
\end{equation*}
where $\eta:=\big(\theta_b,\theta_d,u_a,u_b\big)$ and \(f_{\eta}\big(g_x\big):=2\theta_d+\theta_bg_x\). Assuming $\theta_b\ne 0$, we obtain that
\begin{equation}
	\label{eq:control_function_example_01}
	u_{\eta}^{*}\big(g_x\big):=u_a \mathds{1}_{\mathbb{I}_1}\big(g_x\big)+\frac{1}{2}f_{\eta}\big(g_x\big)\mathds{1}_{\mathbb{I}_2}\big(g_x\big)+u_b\mathds{1}_{\mathbb{I}_3}\big(g_x\big)
\end{equation}
where
\begin{equation*}
	\mathbb{I}_1:=\Bigg(-\infty,\frac{2\big(u_a-\theta_d\big)}{\theta_b}\Bigg],\hspace{0.15cm} \mathbb{I}_2:=\Bigg[\frac{2\big(u_a-\theta_d\big)}{\theta_b},\frac{2\big(u_b-\theta_d\big)}{\theta_b}\Bigg]\hspace{0.15cm}\text{and}\hspace{0.15cm}\mathbb{I}_3:=\Bigg[\frac{2\big(u_b-\theta_d\big)}{\theta_b},+\infty\Bigg).
\end{equation*}
Note that $u_{\eta}^{*}$ is an poligonal function and therefore Lipschitz continuous. Furthermore, observe that if \(v^{\beta}:\mathbb{X}\mapsto \mathbb{R}\) is at least Lipschitz continuous, then by Rademacher’s Theorem \cite[Theorem 3.2]{EvansGariepy}, the derivative of \(v^{\beta}\) exists almost everywhere with respect to the Lebesgue measure and is measurable at every point where it is differentiable. Thus, by means of equation \eqref{eq:control_function_example_01}, we obtain the following Markovian control
\begin{equation}
	\label{eq:markovian_control_example_01}
	\widetilde{u}\big(x\big):=u_{\eta}^{*}\bigg(\partial_xv^{\beta}\big(x\big)\bigg).
\end{equation}

Observe that, due to the low regularity of the optimal value function \eqref{eq:optimal_value_function_example_01} --- even in the case where it is Lipschitz continuous --- it is not possible to apply classical control tools such as, for instance, the Verification Theorem to ensure the optimality of \eqref{eq:markovian_control_example_01}.

Finally, by Theorem \ref{tm:existence_viscosity_solution_pde_hjb}, the optimal value function \eqref{eq:optimal_value_function_example_01} is a viscosity solution of the PDE \eqref{eq:HJB_equation_exemplo_01} and, by Theorem \ref{tm:unicity_PVC_Von_Neumann}, it is the unique viscosity solution. 

\subsubsection{Control Problem with Fully Nonlinear HJB Equation}
\label{subsec:control_problem_fully_nonlinear_HJB_equation}
The stochastic dynamical system we will be working with is defined by:
	\begin{align}
		\label{eq:dynamic_system_exemplo_02}
		dX_x(t)=&\bigg[\theta_a X_x(t)-\theta_b u(t)\bigg]\,dt+\sigma_{X} \sqrt{u(t)}X_x(t)\,dW(t)-D_x\phi\big(X_x(t)\big)\,dl_{x}(t)
	\end{align}
The difference between the dynamical system \eqref{eq:dynamic_system_exemplo_01} shown previously and \eqref{eq:dynamic_system_exemplo_02} is that we include the control process in the diffusion term. 

Using the same operating cost as in example \ref{subsec:control_problem_semilinear_hjb_equation} (equation \eqref{eq:operational_cost_example_01}) as well as the same preventive boundary cost, we obtain the following HJB equation
\begin{subequations}
	\label{subeq:HJB_equation_boundary_condition_example_02}
	\begin{align}
		\label{eq:HJB_equation_exemplo_02}
		\beta v^{\beta}\big(x\big)-\mathcal{H}\Big(x,\partial_xv^{\beta}\big(x\big),\partial^2_{x^2}v^{\beta}\big(x\big)\Big)&=0\quad\forall x\in \mathbb{X},\\
		\label{eq:boundary_condition_example_02}
		\partial_{x}v^{\beta}\big(-\alpha\big)=-\theta_e\hspace{0.15cm}\text{and}\hspace{0.15cm} \partial_{x}v^{\beta}\big(\alpha\big)&=\theta_e.
	\end{align}
\end{subequations}
where the Hamiltonian is defined by
\begin{equation*}
	\mathcal{H}\Big(x,g_x,H_x\Big):=\min_{u\in [u_a,u_b]}\Bigg\{u^2-\bigg(2\theta_d+\theta_bg_x-\frac{1}{2}\sigma^2_{X}x^2H_x\bigg)u\Bigg\}+\theta_d^2.
\end{equation*}
To obtain an explicit formula for the Hamiltonian \eqref{eq:HJB_equation_exemplo_02}, we must calculate the minimization 
\begin{equation*}
	\widetilde{u}_{\eta}^{*}\big(x,g_x,H_x\big):=\arg\min_{u\in [u_a,u_b]}\Bigg\{u^2- \widetilde{f}_{\eta}\big(x,g_x,H_x\big)u\Bigg\}
\end{equation*}
where $\eta:=\big(\theta_b,\theta_d,u_a,u_b\big)$ and
\[
   \widetilde{f}_{\eta}\big(x,g_x,H_x\big):=f_{\eta}\big(g_x\big)-\frac{1}{2}\sigma^2_{X}x^2H_x.
\]
Due to the explicit dependence on both the Hessian matrix and the state variable \(x\), the analysis carried out in equation~\eqref{eq:control_function_example_01} becomes more intricate. Nevertheless, it is still possible to derive insights by treating \(x \in \mathbb{X}\) and \(H_x \in \mathbb{R}\) as free parameters, while considering \(g_x\) as a dependent variable. Under this framework, we define the control law as follows:
\begin{equation}
	\label{eq:control_function_example_02}
	\widetilde{u}_{\eta}^{*}\big(x,g_x,H_x\big):=u_a \mathds{1}_{\mathbb{I}_1\big\{x, H_x\big\}}\big(g_x\big)+\frac{1}{2}\widetilde{f}_{\eta}\big(x,g_x,H_x\big)\mathds{1}_{\mathbb{I}_2\big\{x, H_x\big\}}\big(g_x\big)+u_b\mathds{1}_{\mathbb{I}_3\big\{x, H_x\big\}}\big(g_x\big)
\end{equation}
where the indicator sets are given by:
\begin{align*}
	\mathbb{I}_1\big\{x, H_x\big\} := & \left(-\infty,\frac{2(u_a-\theta_d)}{\theta_b}+\frac{\sigma^2_Xx^2H_x}{2\theta_b}\right]; \\
	\mathbb{I}_2\big\{x, H_x\big\} := & \left[\frac{2(u_a-\theta_d)}{\theta_b}+\frac{\sigma^2_Xx^2H_x}{2\theta_b},\frac{2(u_b-\theta_d)}{\theta_b}+\frac{\sigma^2_Xx^2H_x}{2\theta_b}\right]; \\
	\mathbb{I}_3\big\{x, H_x\big\} := & \left[\frac{2(u_b-\theta_d)}{\theta_b}+\frac{\sigma^2_Xx^2H_x}{2\theta_b},+\infty\right).
\end{align*}
 Furthermore, when \(x = 0\), the expression in~\eqref{eq:control_function_example_02} reduces to that of equation~\eqref{eq:control_function_example_01}.

By Theorem \ref{tm:existence_viscosity_solution_pde_hjb}, the optimal value function \eqref{eq:optimal_value_function_example_01} is a viscosity solution of the PDE \eqref{eq:HJB_equation_exemplo_02} and, by Theorem \ref{tm:unicity_PVC_Von_Neumann}, it is the unique viscosity solution.
\bibliographystyle{siam}
\bibliography{references}
\end{document}